\author{Branko Nikoli\'c and Ross Street}
\thanks{The first author gratefully acknowledges the support of an International Macquarie University Research Scholarship.
The second author gratefully acknowledges the support of Australian Research Council Discovery Grant DP160101519.}
\address{Mathematics Department, Macquarie University, \\ NSW 2109, Australia}
\title {Hopf rings for grading and differentials}
\keywords{differential graded abelian group, chain complex, Hopf monoid, coalgebra, semidirect product}
\theoremstyle{definition}
\newtheorem{defn}{Definition}[section]
\newtheorem{prop}{Proposition}[section]
\newtheorem{lemma}{Lemma}[section]
\newtheorem{example}{Example}[section]
\begin{document}

\maketitle
\begin{abstract}
In the category of abelian groups, Pareigis constructed a Hopf ring whose comodules are differential graded abelian groups. We show that this Hopf ring can be obtained by combining grading and differential Hopf rings using semidirect product in fairly general symmetric monoidal additive categories.
\end{abstract}

\section{Introduction}

Chain complexes of abelian groups (also called differential graded abelian groups) form a symmetric monoidal closed category DGAb (explained in detail in Section \ref{sec:DGAb}) which can be obtained as a category of comodules for a Hopf ring\footnote{The word ``ring'' will denote a monoid in an additive monoidal category. We save ``(co)algebra'' for (co)algebras for a (co)monad. We use ``(co)module''
for coalgebras for (co)monads obtained by tensoring with a (co)monoid.} $P$ in the symmetric monoidal closed category Ab of abelian groups \cite{Pareigis1981}. DGAb has a full symmetric monoidal closed subcategory GAb, consisting of graded abelian groups seen as complexes with zero differential, which can also be obtained as a category of comodules of a different Hopf ring in Ab.

The main new insight, our first construction, occurs in a braided monoidal additive category $\mathcal{W}$, begins with an object $D$ for which the braiding $\sigma_{D,D} : D\otimes D\to D\otimes D$ is minus the identity morphism of
$D\otimes D$, and produces a Hopf ring structure on the object $H=I\oplus D$
(where $I$ is the unit for tensor $\otimes$). 

The second construction imports the additive group of integers $\mathbb{Z}$ 
into a symmetric monoidal additive category $\mathcal{V}$ by tensoring with
the unit to obtain a Hopf ring $Z=\mathbb{Z}\cdot I$ equipped with a braiding
coelement $\gamma : Z\otimes Z\to I$.  

The third construction, called bosonization in \cite{Majid2006}, is a generalisation of the semidirect product $M\rtimes G$ of a group $G$ with a $G$-module $M$.
At a very general level \cite{Brug2011} it concerns the question of 
(co)monadicity of a composite of two (co)monadic functors.  
We work in any symmetric monoidal category $\mathcal{V}$ (additivity is not needed). 
Given any (Hopf) bimonoid $A$ equipped with a braiding coelement $\gamma$ in $\mathcal{V}$, the category $\mathrm{Comod}_{\mathcal{V}}(A)$ of
$A$-comodules becomes braided monoidal (as explained for example in \cite{Joyal1991} for
the case where $\mathcal{V}$ is vector spaces). 
So the concept of Hopf
monoid $H$ makes sense in $\mathrm{Comod}_{\mathcal{V}}(A)$. 
The semidirect product $H\rtimes A$ is a Hopf monoid in $\mathcal{V}$ for which 
$$\mathrm{Comod}_{\mathcal{V}}(H\rtimes A) \simeq 
\mathrm{Comod}_{\mathrm{Comod}_{\mathcal{V}}(A)}(H) \ .$$ 

In Sections~\ref{sec:DGAb} and \ref{sec:GAb} we review the monoidal categories of differential graded and graded abelian groups. Section~\ref{sec:semi} explains the
semidirect product construction. The heart of the paper is Section~\ref{sec:birings}
where we see the first and second constructions. 
It culminates by showing that the Pareigis Hopf ring $P$ is an example of
the third construction $H\rtimes A$
where $A$ is obtained by the second construction and $H$ the first.          
          
\section{Differential graded abelian groups}\label{sec:DGAb}

The category DGAb (differential graded abelian groups) has chain complexes $A$ as objects. They are defined by diagrams
\begin{equation}\label{eq:diffArrows}
\ldots\xrightarrow{d}A_{n+1}\xrightarrow{d}A_{n}\xrightarrow{d}A_{n-1}\xrightarrow{d}\ldots
\end{equation}
in Ab with group homomorphisms $d=d^A_n : A_n\to A_{n-1}$ satisfying $d\circ d=0$. 
An arrow $f:A\rightarrow B$, called a chain map, consists of group homomorphisms
$f_n:A_n\rightarrow B_n$, indexed by integers, satisfying
\begin{equation}\label{eq:arrDGAb}
f_n\circ d=d\circ f_{n-1}\,.
\end{equation}

DGAb is monoidal with tensor product defined by
\begin{eqnarray}\label{eq:monDGAb}
\begin{aligned}
(A\otimes B)_n = \sum_{i+j=n} A_i\otimes B_j \phantom{AAAAAAAAA}\\
d(a\otimes b) = da\otimes b + (-1)^i a\otimes db,\;\text{for}\;a\in A_i\;\text{and}\;b\in B_j\,.\end{aligned}
\end{eqnarray}
The unit is given by $I_n=\delta_{n0}\mathbb{Z}$ (Kronecker delta). 
There is a symmetry
\begin{align}
\sigma(a\otimes b)=(-1)^{ij}b\otimes a
\end{align}
and a closed structure
\begin{eqnarray}\label{eq:clDGAb}
\begin{aligned}
[B,C]_n=\prod_{j}\mathrm{Ab}(B_j,C_{j+n}) \phantom{AAAAAAAAA}\\
(df)_j b=d(f_j(b))-(-1)^nf_{j-1}(db),\;\text{for}\;f\in [B,C]_n\;\text{and}\;b\in B_j\,.
\end{aligned}
\end{eqnarray}

The monoidal category DGAb (with direction of arrows in (\ref{eq:diffArrows}) inverted) can be obtained (see \cite{Pareigis1981}) as the category of Eilenberg-Moore (EM) coalgebras for the monoidal comonad obtained by tensoring with the Hopf ring $P$ defined by
\begin{align}\label{PareigisEx}
P&=\mathbb{Z}\langle {\xi,\xi^{-1},\psi} \rangle/(\xi\psi+\psi\xi,\psi^2)\\
\Delta(\xi)&= \xi\otimes \xi,\; 
\epsilon(\xi)=1,\; s(\xi)=\xi^{-1} \nonumber \\
\Delta(\psi)&= \psi\otimes 1+\xi^{-1}\otimes \psi,\;
\epsilon(\psi)=0,\; s(\psi)= \psi\xi \nonumber
\end{align}
where $\Delta$ is the comultiplication, $s$ is the antipode, and corner brackets denote non-commutativity.

We will see in the end how this ring can be slightly modified to give the direction of arrows as in (\ref{eq:diffArrows}). It amounts to exchanging $\xi$ and $\xi^{-1}.$

\section{Graded abelian groups}\label{sec:GAb}

The category GAb of graded abelian groups can be seen as a full subcategory of DGAb consisting of chain complexes with all $d=0$. 
GAb inherits a symmetric monoidal closed structure, which follows from (\ref{eq:monDGAb}) and (\ref{eq:clDGAb}). 
On the other hand, there is a forgetful functor $U:\mathrm{DGAb}\rightarrow \mathrm{GAb}$, with adjoints $L\dashv U\dashv R$
given by
\begin{align}
%\mathrm{GAb}&\xrightarrow \mathrm{DGAb}
L(C)_n&=C_{n+1}\oplus C_n\\
R(C)_n&=C_{n}\oplus C_{n-1}\\
d&=\left[\begin{array}{cc}
    0 & 1	\\
    0 & 0 
    \end{array}\right]\,.
\end{align}
$U$ reflects isomorphisms, since $f^{-1}$ satisfies (\ref{eq:arrDGAb}) if and only if $f$ does. The functor $U$, having both adjoints, preserves all limits and colimits, in particular $U\text{-}$split equalizers and coequalizers. Hence, $U$ is both comonadic and monadic (see Chapter 3, Theorem 10 of \cite{Barr1985} where the word ``triple'' is
used for ``monad'').
%There is an isomoprhism of comonads on GAb
%\begin{equation}
%UR\cong (SI\oplus I)\otimes -
%\end{equation}

There is a functor $\Sigma:\mathrm{GAb}\rightarrow \mathrm{Ab}$ that takes the coproduct (sum) of all components. It has a right adjoint which creates $\mathbb{Z}$ copies of each abelian group. The diagram below summarises all relevant adjunctions.
\begin{equation}\label{diag:adjs}
\begin{tikzpicture}[scale=2, every node/.style={scale=1},baseline=(current  bounding  box.center)]
% % % left hand side
%objects
\node (d) at (-2,0) {$\mathrm{DGAb}$};
\node (g) at (0,0) {$\mathrm{GAb}$};
\node (a) at (2,0) {$\mathrm{Ab}$};
\node (sym1) at (-1,0.2) {$\bot$};
\node (sym2) at (-1,-0.2) {$\bot$};
\node (sym3) at (1,-0.2) {$\bot$};
%arrows
\path[,->,font=\scriptsize,>=angle 90,bend right]
(g) edge node[above] {$L$} (d);
\path[,->,font=\scriptsize,>=angle 90]
(d) edge node[below left] {$U$} (g);
\path[->,font=\scriptsize,>=angle 90,bend left]
(g)edge node[below] {$R$} (d);
\path[,->,font=\scriptsize,>=angle 90,bend left]
(a) edge node[below] {$C$} (g);
\path[,->,font=\scriptsize,>=angle 90]
(g) edge node[above] {$\Sigma$} (a);
%\path[<-,font=\scriptsize,>=angle 90,bend right,out=45,in=-45]
%(A2)edge node[right] {$\mathrm{Caten}(\mathcal{X},\mathcal{G})$} (A2);
%
\end{tikzpicture}
\end{equation}

Both $\Sigma \circ C$ and $U\circ R$ are comonads isomorphic to tensoring with a certain Hopf ring in Ab and GAb respectively. In Section \ref{sec:semi} we discuss the semidirect product construction in general, and then in section \ref{sec:birings} show that the Pareigis biring is the semidirect product of the two birings generating $\Sigma \circ C$ and  $U\circ R$.

\section{Semidirect product} \label{sec:semi}

Let $\mathcal{W}= \mathrm{Comod}_{\mathcal{V}}(A)$ be the category of 
comodules for a (Hopf) bimonoid $A$ in a symmetric monoidal $\mathcal{V}$.
For $\mathcal{W}$ to be braided we need $A$ to have a braiding coelement 
$\gamma: A\otimes A \rightarrow I$ satisfying the duals of the three axioms at page 58 of \cite{Joyal1991}, which we quote here in the form we are going to use later (we read the string diagrams from bottom to top):
\begin{equation}\label{ax:elt1}
\begin{tikzpicture}[baseline=(current  bounding  box.center)]
\def \strx {0.5}
\def \stry {0.2}
\def \angmnd {0}
\node (lhs) at (-1,0) {\begin{tikzpicture}
\coordinate (inA) at (-1 * \strx, 5 * \stry);
\coordinate (elt) at (1 * \strx, 3 * \stry);
\coordinate (cmndA) at (-1 * \strx, 3 * \stry);
\coordinate (mndA1) at (-0.5 * \strx, -1 * \stry);
\coordinate (mndA2) at (-1 * \strx, -3 * \stry);
\coordinate (outA1) at (-1 * \strx, -5 * \stry);
\coordinate (outA2) at (1 * \strx, -5 * \stry);
\draw[very thin] (inA) to[out=-90,in=90] (cmndA);
\draw[very thin] (elt) to[out=180,in=\angmnd] (mndA1);
\draw[very thin] (elt) to[out=0,in=\angmnd] (mndA2);
\draw[very thin] (cmndA) to[out=180+\angmnd,in=180-\angmnd] (mndA2);
\draw[very thin] (cmndA) to[out=-\angmnd,in=180-\angmnd] (mndA1);
\draw[very thin] (mndA1) to[out=-90,in=90] (outA2);
\draw[very thin] (mndA2) to[out=-90,in=90] (outA1);
\end{tikzpicture}};
\node (eq) at (0,0) {=};
\node (rhs) at (1,0) {\begin{tikzpicture}
\coordinate (inA) at (1 * \strx, 5 * \stry);
\coordinate (elt) at (-1 * \strx, 3 * \stry);
\coordinate (cmndA) at (1 * \strx, 3 * \stry);
\coordinate (mndA1) at (-1 * \strx, -1 * \stry);
\coordinate (mndA2) at (1 * \strx, -1 * \stry);
\coordinate (outA1) at (-1 * \strx, -5 * \stry);
\coordinate (outA2) at (1 * \strx, -5 * \stry);
\draw[very thin] (inA) to[out=-90,in=90] (cmndA);
\draw[very thin] (elt) to[out=180,in=180-\angmnd] (mndA1);
\draw[very thin] (elt) to[out=0,in=180-\angmnd] (mndA2);
\draw[very thin] (cmndA) to[out=180+\angmnd,in=\angmnd] (mndA1);
\draw[very thin] (cmndA) to[out=-\angmnd,in=\angmnd] (mndA2);
\draw[very thin] (mndA1) to[out=-90,in=90] (outA2);
\draw[very thin] (mndA2) to[out=-90,in=90] (outA1);
\end{tikzpicture}};
\end{tikzpicture}
\end{equation}
\begin{equation}\label{ax:elt2}
\begin{tikzpicture}[baseline=(current  bounding  box.center)]
\def \strx {0.5}
\def \stry {0.8}
\def \angmnd {0}
\node (lhs) at (-1,0) {\begin{tikzpicture}
\coordinate (elt) at (0 * \strx, 1 * \stry);
\coordinate (cmndA) at (1 * \strx, 0.5 * \stry);
\coordinate (outA1) at (-1 * \strx, -1 * \stry);
\coordinate (outA2) at (0.5 * \strx, -1 * \stry);
\coordinate (outA3) at (1 * \strx, -1 * \stry);
\draw[very thin] (elt) to[out=0,in=90] (cmndA);
\draw[very thin] (elt) to[out=180,in=90] (outA1);
\draw[very thin] (cmndA) to[out=180,in=90] (outA2);
\draw[very thin] (cmndA) to[out=0,in=90] (outA3);
\end{tikzpicture}};
\node (eq) at (0,0) {=};
\node (rhs) at (1,0) {\begin{tikzpicture}
\coordinate (elt1) at (0 * \strx, 1 * \stry);
\coordinate (elt2) at (0 * \strx, 0.5 * \stry);
\coordinate (mndA) at (-1 * \strx, -0.5 * \stry);
\coordinate (outA1) at (-1 * \strx, -1 * \stry);
\coordinate (outA2) at (0.5 * \strx, -1 * \stry);
\coordinate (outA3) at (1 * \strx, -1 * \stry);
\draw[very thin] (elt1) to[out=180,in=180-\angmnd] (mndA);
\draw[very thin] (elt1) to[out=0,in=90] (outA3);
\draw[very thin] (elt2) to[out=180,in=\angmnd] (mndA);
\draw[very thin] (elt2) to[out=0,in=90] (outA2);
\draw[very thin] (mndA) to[out=-90,in=90] (outA1);
\end{tikzpicture}};
\end{tikzpicture}
\end{equation}
\begin{equation}\label{ax:elt3}
\begin{tikzpicture}[baseline=(current  bounding  box.center)]
\def \strx {0.5}
\def \stry {0.8}
\def \angmnd {0}
\node (lhs) at (-1,0) {\begin{tikzpicture}
\coordinate (elt1) at (-1 * \strx, 1 * \stry);
\coordinate (elt2) at (1 * \strx, 1 * \stry);
\coordinate (mndA) at (0.5 * \strx, -0.5 * \stry);
\coordinate (outA1) at (-1.5 * \strx, -1 * \stry);
\coordinate (outA2) at (-0.5 * \strx, -1 * \stry);
\coordinate (outA3) at (0.5 * \strx, -1 * \stry);
\draw[very thin] (elt1) to[out=0,in=180-\angmnd] (mndA);
\draw[very thin] (elt1) to[out=180,in=90] (outA1);
\draw[very thin] (elt2) to[out=0,in=\angmnd] (mndA);
\draw[very thin] (elt2) to[out=180,in=90] (outA2);
\draw[very thin] (mndA) to[out=-90,in=90] (outA3);
\end{tikzpicture}};
\node (eq) at (0,0) {=};
\node (rhs) at (1,0) {\begin{tikzpicture}
\coordinate (elt) at (0 * \strx, 1 * \stry);
\coordinate (cmndA) at (-1 * \strx, 0.5 * \stry);
\coordinate (outA1) at (-1 * \strx, -1 * \stry);
\coordinate (outA2) at (-0.5 * \strx, -1 * \stry);
\coordinate (outA3) at (1 * \strx, -1 * \stry);
\draw[very thin] (elt) to[out=180,in=90] (cmndA);
\draw[very thin] (elt) to[out=0,in=90] (outA3);
\draw[very thin] (cmndA) to[out=180,in=90] (outA1);
\draw[very thin] (cmndA) to[out=0,in=90] (outA2);
\end{tikzpicture}};
\end{tikzpicture}
\end{equation}
Explicitly, the braiding of $(X,\alpha_X)$ and $(Y,\alpha_Y)$ is\footnote{We often omit writing $\otimes$.}
\begin{equation*}
\sigma_{(X,\alpha_X),(Y,\alpha_Y)}=XY\xrightarrow{\sigma_{X,Y}}YX\xrightarrow{\alpha_Y\alpha_X}AYAX\xrightarrow{1\sigma_{Y,A} 1}AAYX\xrightarrow{\gamma 11}YX
\end{equation*}
%\begin{equation}
%s_{XY}=XY\xrightarrow{\gamma 11}AAXY\xrightarrow{1\sigma 1}AXAY\xrightarrow{\alpha_X\alpha_Y}XY\xrightarrow{\sigma}YX
%\end{equation}

For an $A$-comodule $(X,\alpha_X)$ define
%\begin{equation}
%\tau_X=AX\xrightarrow{\delta 1} AAX \xrightarrow{1\sigma_{AX}}AXA
%\xrightarrow{\alpha_X 1} XA\,.
%\end{equation}
\begin{equation}
\tau_X=XA\xrightarrow{\alpha_X 1} AXA \xrightarrow{1\sigma}AAX
\xrightarrow{\mu 1} AX\,.
\end{equation}
\begin{prop}
If $X$ is a comonoid in $\mathcal{W}$, then $\tau_X$ is a distributive law in $\mathcal{V}$ (or equivalently, of the comonad $X\otimes -$ over the comonad $A\otimes -$).
\end{prop}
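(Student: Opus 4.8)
The plan is to verify directly the four axioms that make a morphism $\tau_X\colon XA\to AX$ a distributive law of the comonad $X\otimes-$ over the comonad $A\otimes-$. Writing $(\Delta,\epsilon)$ for the comonoid structure of $A$ and $(\delta,e)$ for that of $X$, these are the two counit conditions $(\epsilon\otimes1_X)\,\tau_X=1_X\otimes\epsilon$ and $(1_A\otimes e)\,\tau_X=e\otimes1_A$, together with the two comultiplication conditions $(\Delta\otimes1_X)\,\tau_X=(1_A\otimes\tau_X)(\tau_X\otimes1_A)(1_X\otimes\Delta)$ and $(1_A\otimes\delta)\,\tau_X=(\tau_X\otimes1_X)(1_X\otimes\tau_X)(\delta\otimes1_A)$. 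Note first that $X$ is a comonoid in $\mathcal V$ because the forgetful functor $\mathcal W\to\mathcal V$ is strict monoidal, so that $X\otimes-$ is genuinely a comonad on $\mathcal V$ and the statement makes sense.

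First I would dispatch the two axioms that only involve the $A$-structure, since neither uses the comonoid structure of $X$. For the $\epsilon$-condition I use that $\epsilon$ is an algebra morphism (part of $A$ being a bimonoid), so $\epsilon\mu=\epsilon\otimes\epsilon$ collapses the multiplication node in $\tau_X$, after which the counit law $(\epsilon\otimes1_X)\alpha_X=1_X$ for the coaction finishes the job. For the $\Delta$-condition I use that $\Delta$ is an algebra morphism, so $\Delta\mu=(\mu\otimes\mu)(1_A\otimes\sigma_{A,A}\otimes1_A)(\Delta\otimes\Delta)$, together with coassociativity $(\Delta\otimes1_X)\alpha_X=(1_A\otimes\alpha_X)\alpha_X$ of the coaction; diagrammatically this amounts to sliding the two copies of $X$ produced on the right-hand side through the two tensor factors of $A$.

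The hypothesis that $X$ is a comonoid in $\mathcal W$ enters precisely through the remaining two axioms, which turn out to encode exactly that $e$ and $\delta$ are morphisms of $A$-comodules. Here $I$ carries the trivial coaction $\eta\colon I\to A$ and $X\otimes X$ carries the tensor-product coaction $\alpha_{X\otimes X}=(\mu\otimes1_X\otimes1_X)(1_A\otimes\sigma_{X,A}\otimes1_X)(\alpha_X\otimes\alpha_X)$. That $e\colon X\to I$ is a comodule map reads $(1_A\otimes e)\alpha_X=\eta\,e$; feeding this into $\tau_X$ turns $(1_A\otimes e)\,\tau_X$ into $e\otimes1_A$, the surviving unit being absorbed by $\mu$. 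That $\delta\colon X\to X\otimes X$ is a comodule map reads $\alpha_{X\otimes X}\,\delta=(1_A\otimes\delta)\,\alpha_X$, i.e. coacting on a single copy of $X$ and then comultiplying equals comultiplying first and then merging the two resulting $A$-labels via $\mu$.

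The main obstacle is this last axiom, the $\delta$-compatibility, because it is the one place where the multiplication $\mu$ built into $\tau_X$ must be reconciled with the $\mu$ appearing in the coaction $\alpha_{X\otimes X}$. Graphically, on the right-hand side the two applications of $\tau_X$ produce two coaction nodes, one on each copy of $X$, whose $A$-outputs are then multiplied against the incoming $A$; associativity of $\mu$ regroups these products, and the comodule-morphism equation for $\delta$ rewrites ``coact-then-comultiply'' as ``comultiply-then-merge'', which is exactly the left-hand side after applying $\delta$ to the single coacted copy of $X$. Once this regrouping is arranged the remaining steps are routine uses of associativity and coassociativity. It is worth remarking that the braiding coelement $\gamma$ and the braiding of $\mathcal W$ play no role in the argument: only the underlying monoidal structure of $\mathcal W=\mathrm{Comod}_{\mathcal V}(A)$, which requires nothing beyond the bimonoid $A$, is needed.
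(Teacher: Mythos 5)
Your proposal is correct and takes essentially the same route as the paper: the paper's (much terser) proof also verifies the four distributive-law axioms, deriving the two involving $\epsilon$ and $\Delta$ of $A$ from the counit--multiplication compatibility and the bimonoid axiom respectively, and the two involving $e$ and $\delta$ of $X$ from the fact that these are $A$-comodule morphisms. Your additional bookkeeping (coaction counit/coassociativity, unit and associativity of $\mu$, naturality of $\sigma$) and your closing remark that the braiding coelement $\gamma$ plays no role here are accurate refinements of details the paper leaves implicit.
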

\begin{proof}
There are four axioms to check. The two involving counit and comultiplication for $A$ use the compatibility of counit with the multiplication of $A$, and the bimonoid axiom, respectively.
The two involving counit and comultiplication for $X$ follow from the fact that they are $A$-comodule morphisms.
\end{proof}

Let $H$ be a bimonoid in $\mathcal{W}$. 
It automatically inherits a (co)monoid structure in $\mathcal{V}$ by forgetting that (co)unit and (co)multiplication maps are $A$-comodule morphisms. 
Note that, unless $\gamma=\epsilon\otimes\epsilon$, 
$H$ need not be a bimonoid in $\mathcal{V}$. 
\begin{defn}
The semidirect product $H\rtimes A$ of a bimonoid $A$ in $\mathcal{V}$ and a bimonoid $(H,\alpha_H)$ in $\mathcal{W}=\mathrm{Comod}_\mathcal{V}(A)$ 
is the object $H\otimes A$, with comonoid structure given via the distributive law 
$\tau_H$, and monoid structure via the distributive law $s_{AH}$. 
Using a thick line for $H$, thin line for $A$, we depict the comultiplication 
and the multiplication of $H\rtimes A$ by the following string diagrams
\begin{equation}
\begin{tikzpicture}[baseline=(current  bounding  box.center)]
\def \strx {0.5}
\def \stry {0.2}
\def \angmnd {0}
\def \angio {10}

\coordinate (inH1) at (-3 * \strx, 6 * \stry);
\coordinate (inA1) at (-1 * \strx, 6 * \stry);
\coordinate (inH2) at (1 * \strx, 6 * \stry);
\coordinate (inA2) at (3 * \strx, 6 * \stry);

\coordinate (mndH) at (-1 * \strx, -3 * \stry);
\coordinate (cmndA1) at (-0.5 * \strx, 2 * \stry);
\coordinate (act1) at (-0.5 * \strx, 0 * \stry);
\coordinate (mndA1) at (1 * \strx, -3 * \stry);
\coordinate (cmndA2) at (1 * \strx, -6 * \stry);
\coordinate (cmndH) at (-1 * \strx, -6 * \stry);

\draw[thick] (inH1) to[out=-90+\angio,in=180-\angmnd] (mndH);
\draw[very thin] (inA1) to[out=-90+\angio,in=90] (cmndA1);
\draw[thick] (inH2) to[out=-90-\angio,in=\angmnd] (act1);
\draw[very thin] (inA2) to[out=-90-\angio,in=\angmnd] (mndA1);
% % % % % % % % % % % %
\draw[very thin] (cmndA1) to[out=180+\angmnd,in=180-\angmnd] (act1);
\draw[very thin] (cmndA1) to[out=-\angmnd,in=180-\angmnd] (mndA1);
% % % % % % % % % % % % % % % %
\draw[thick] (act1) to[out=-90,in=\angmnd] (mndH);
% % % % % % % % % % % % % % % %
\draw[thick] (mndH) to[out=-90,in=90] (cmndH);
\draw[very thin] (mndA1) to[out=-90,in=90] (cmndA2);
\end{tikzpicture}
\,\,\,\,\,\,
\begin{tikzpicture}[baseline=(current  bounding  box.center)]
\def \strx {0.5}
\def \stry {0.2}
\def \angmnd {0}
\def \angio {10}

\coordinate (mndH) at (-1 * \strx, 6 * \stry);
\coordinate (mndA1) at (1 * \strx, 6 * \stry);

\coordinate (cmndA2) at (1 * \strx, 3 * \stry);
\coordinate (cmndH) at (-1 * \strx, 3 * \stry);
\coordinate (elt) at (-1 * \strx, 1 * \stry);
\coordinate (act2) at (-0.5 * \strx, -2 * \stry);
\coordinate (mndA2) at (0.5 * \strx, -2 * \stry);

\coordinate (outH1) at (-3 * \strx, -6 * \stry);
\coordinate (outA1) at (-1 * \strx, -6 * \stry);
\coordinate (outH2) at (1 * \strx, -6 * \stry);
\coordinate (outA2) at (3 * \strx, -6 * \stry);

\draw[thick] (mndH) to[out=-90,in=90] (cmndH);
\draw[very thin] (mndA1) to[out=-90,in=90] (cmndA2);
% % % % % % % % % % % % % % % % % % % %
\draw[thick] (cmndH) to[out=180+\angmnd,in=90-\angio] (outH1);
\draw[thick] (cmndH) to[out=-\angmnd,in=\angmnd] (act2);
\draw[very thin] (cmndA2) to[out=180+\angmnd,in=\angmnd] (mndA2);
\draw[very thin] (cmndA2) to[out=-\angmnd,in=90+\angio] (outA2);
% % % % % % % % % % % % % % % % % % % % %
\draw[very thin] (elt) to[out=180,in=180-\angmnd] (act2);
\draw[very thin] (elt) to[out=0,in=180-\angmnd] (mndA2);
% % % % % % % % % % % % % % % % % % % % %
\draw[thick] (act2) to[out=-90,in=90+\angio] (outH2);
\draw[very thin] (mndA2) to[out=-90,in=90-\angio] (outA1);
\end{tikzpicture}
\end{equation}
where all relevant arrows in $\mathcal{V}$ are uniquely determined by their source and target, so there is no need for labelling.
\end{defn}
\begin{prop}\label{prop:semi}
The semidirect product $H\rtimes A$ is a bimonoid in $\mathcal{V}$. If $H$ and $A$ are Hopf, with antipodes graphically represented by dots, then so is $H\rtimes A$, with the antipode given by diagram (\ref{ap}).
\begin{equation}\label{ap}
\begin{tikzpicture}[baseline=(current  bounding  box.center)]
\def \strx {0.5}
\def \stry {0.2}
\def \angmnd {0}
\def \angio {0}

\coordinate (inH) at (-1 * \strx, 6 * \stry);
\coordinate (inA) at (1 * \strx, 6 * \stry);

\coordinate (apH) at (-1.5 * \strx, 0 * \stry);
\fill (apH) circle (1.5pt);
\coordinate (apA1) at (1 * \strx, 5 * \stry);
\fill (apA1) circle (1pt);

\coordinate (cmndA1) at (1 * \strx, 4 * \stry);
\coordinate (elt) at (0.5 * \strx, 2.5 * \stry);
\coordinate (apA2) at (1 * \strx, 1.75 * \stry);
\fill (apA2) circle (1pt);
\coordinate (cmndA2) at (1 * \strx, 1 * \stry);
\coordinate (mndA1) at (-0 * \strx, -1 * \stry);
\coordinate (mndA2) at (-0.5 * \strx, -2 * \stry);
\coordinate (mndA3) at (1 * \strx, -5 * \stry);

\coordinate (act) at (-1 * \strx, -5 * \stry);

\coordinate (outH) at (-1 * \strx, -6 * \stry);
\coordinate (outA) at (1 * \strx, -6 * \stry);

\draw[thick] (inH) to[out=-90+\angio,in=90] (apH)
				   to[out=-90,in=\angmnd] (act)
				   to[out=-90,in=90-\angio] (outH);
\draw[very thin] (inA) to[out=-90-\angio,in=90] (apA1)
					   to[out=-90,in=90] (cmndA1);
\draw[very thin] (cmndA1) to[out=180+\angmnd,in=180-\angmnd] (mndA2);
\draw[very thin] (cmndA1) to[out=-\angmnd,in=180-\angmnd] (mndA3)
						  to[out=-90,in=90+\angio] (outA);
\draw[very thin] (elt) to[out=180,in=\angmnd] (mndA1);
\draw[very thin] (elt) to[out=0,in=90](apA2)
					   to[out=-90,in=90] (cmndA2);
\draw[very thin] (cmndA2) to[out=180+\angmnd,in=180-\angmnd] (mndA1);
\draw[very thin] (cmndA2) to[out=-\angmnd,in=\angmnd] (mndA3);
\draw[very thin] (mndA1) to[out=-90,in=\angmnd] (mndA2)
						 to[out=-90,in=180-\angmnd] (act);
\end{tikzpicture}
\end{equation}
\end{prop}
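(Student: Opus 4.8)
The plan is to treat the two assertions separately, using the string calculus set up above. For the bimonoid claim the key observation is that most of the axioms cost nothing: the comonoid structure on $H\otimes A$ is the composite\=/comonad comultiplication attached to the distributive law $\tau_H$ established in the preceding Proposition, and the monoid structure is the composite\=/monad multiplication attached to the distributive law $s_{AH}$ of the Definition. A (co)monad built from a distributive law is automatically (co)associative and (co)unital, so coassociativity, counitality, associativity and unitality of $H\rtimes A$ are immediate. The genuinely new content is therefore only (i) the interplay of the unit $\eta_H\otimes\eta_A$ and counit $\epsilon_H\otimes\epsilon_A$ with the opposite structure, and (ii) the single bialgebra compatibility axiom. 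Part (i) is a short diagram check: it reduces to the (co)unit laws of $H$ and $A$ together with the normalisations $\gamma(\eta_A\otimes 1)=\epsilon_A=\gamma(1\otimes\eta_A)$, which fall out of \eqref{ax:elt2} and \eqref{ax:elt3}.

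The substance is the compatibility axiom. Here I would expand both $\Delta_{H\rtimes A}\circ\mu_{H\rtimes A}$ and $(\mu\otimes\mu)\circ(1\otimes\sigma\otimes1)\circ(\Delta\otimes\Delta)$, with $\sigma$ the symmetry of $\mathcal{V}$, as string diagrams and transform one into the other. The admissible moves are, in order: that $\mu_H$ and $\Delta_H$ are $A$\=/comodule morphisms, so that coaction nodes slide freely through them; the bimonoid axioms of $A$ in $\mathcal{V}$; and, decisively, the coelement axioms \eqref{ax:elt1}--\eqref{ax:elt3}, which are exactly what let the $\gamma$\=/nodes be routed past the braiding crossings produced by $s_{AH}$ and $\tau_H$. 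The pivotal step is to recognise, inside the fully expanded diagram, the \emph{explicit braiding of $\mathcal{W}$} displayed just before the preceding Proposition, the one carrying the $\gamma$\=/factor, and then to invoke the bimonoid compatibility of $H$ holding internally in the braided category $\mathcal{W}$. Put informally, the $\gamma$\=/twist built into the braiding of $\mathcal{W}$ is precisely what reconciles the coaction\=/braiding $\tau_H$ used in the coproduct with the twisted swap $s_{AH}$ used in the product.

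For the Hopf claim I would check that the morphism \eqref{ap} satisfies the two antipode equations $\mu\circ(S\otimes1)\circ\Delta=\eta\circ\epsilon=\mu\circ(1\otimes S)\circ\Delta$. Substituting \eqref{ap} and simplifying with the antipode axioms for $S_A$ in $\mathcal{V}$ and for $S_H$ in $\mathcal{W}$, together with the coelement axioms to cancel the $\gamma$\=/crossings and the comodule axioms to remove the coaction node, each composite should telescope to $\eta\circ\epsilon$. The two $S_A$\=/dots and the single $\gamma$\=/node appearing in \eqref{ap} beyond the bare antipodes $S_H,S_A$ are exactly the ingredients consumed by these cancellations, which is why the formula takes the shape it does.

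The main obstacle will be the bialgebra\=/compatibility diagram chase. The bookkeeping of the braiding crossings, and in particular deciding where the $\gamma$\=/node must be routed, is delicate, and it is only the three coelement axioms \eqref{ax:elt1}--\eqref{ax:elt3} --- no more and no fewer --- that force the two sides to coincide; once that interchange is secured, the antipode verification and the remaining axioms are routine.
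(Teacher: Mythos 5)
Your plan coincides with the paper's own proof: the paper likewise takes the (co)monoid structures as given by the distributive laws, disposes of the unit/counit compatibilities directly, and establishes the bimonoid axiom by exactly the string-diagram chase you describe --- compatibility of the $A$-coaction with $\Delta_A$, the bimonoid axioms of $A$, the three coelement axioms (\ref{ax:elt1})--(\ref{ax:elt3}), the $A$-comodule-morphism property of $\mu_H$ and $\Delta_H$, and finally the bimonoid axiom of $H$ internal to $\mathcal{W}$ with its $\gamma$-twisted braiding. Your antipode verification, telescoping the two antipode equations via the antipode axioms for $A$ (used repeatedly) and for $H$ in $\mathcal{W}$ together with the comodule and coelement axioms, is also the strategy the paper sketches, so the proposal is correct and essentially identical in approach.
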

\begin{proof}
\begin{figure}\label{fig:diagramsSemi}
\def \strx {0.25 * 1.45}
\def \stry {0.1 * 1.45}
\def \angmnd {0}
\def \angio {10}
\begin{align}
%1
\begin{tikzpicture}[baseline=(current  bounding  box.center)]
\def \angcross{30};
\coordinate (crossH) at (-0.25 * \strx, 0 * \stry);
\coordinate (crossA) at (0.25 * \strx, 0 * \stry);
\coordinate (inH1) at (-3 * \strx, 10 * \stry);
\coordinate (inA1) at (-1 * \strx, 10 * \stry);
\coordinate (inH2) at (1 * \strx, 10 * \stry);
\coordinate (inA2) at (3 * \strx, 10 * \stry);
\coordinate (cmndH1) at (-2.5 * \strx, 8 * \stry);
\coordinate (cmndA1) at (-1.5 * \strx, 8 * \stry);
\coordinate (cmndH2) at (1.5 * \strx, 8 * \stry);
\coordinate (cmndA2) at (2.5 * \strx, 8 * \stry);
\coordinate (elt1) at (-2.75 * \strx, 6 * \stry);
\coordinate (elt12) at (1.5 * \strx, 6 * \stry);
\coordinate (mndA4) at (-1.5 * \strx, 4 * \stry);
\coordinate (mndA6) at (2.5 * \strx, 4 * \stry);
\coordinate (act21) at (-2.5 * \strx, 4 * \stry);
\coordinate (act4) at (1.5 * \strx, 4 * \stry);
\coordinate (cmndA3) at (-2.0 * \strx, -4 * \stry);
\coordinate (cmndA11) at (1.5 * \strx, -4 * \stry);
\coordinate (act3) at (-2.0 * \strx, -7 * \stry);
\coordinate (act22) at (1.5 * \strx, -7 * \stry);
\coordinate (mndH1) at (-2.5 * \strx, -8 * \stry);
\coordinate (mndA2) at (-1 * \strx, -8 * \stry);
\coordinate (mndH2) at (1 * \strx, -8 * \stry);
\coordinate (mndA1) at (2.5 * \strx, -8 * \stry);
\coordinate (outH1) at (-3 * \strx, -10 * \stry);
\coordinate (outA1) at (-1 * \strx, -10 * \stry);
\coordinate (outH2) at (1 * \strx, -10 * \stry);
\coordinate (outA2) at (3 * \strx, -10 * \stry);
\draw[very thin] (cmndA2) to[out=-\angmnd,in=90] (3 * \strx,0)
to [out=-90,in=\angmnd] (mndA1);
\draw[very thin] (cmndA1) to[out=-\angmnd,in=180-\angcross] (crossA);
\draw[very thin] (crossA) to[out=-\angcross,in=90] (cmndA11);
\draw[very thin] (inA2) to[out=-90-\angio,in=90] (cmndA2);
\draw[very thin] (mndA1) to[out=-90,in=90+\angio] (outA2);
\draw[very thin] (cmndA2) to[out=180+\angmnd,in=\angmnd] (mndA6);
\draw[thick] (inH1) to[out=-90+\angio,in=90] (cmndH1);
\draw[very thin] (inA1) to[out=-90+\angio,in=90] (cmndA1);
\draw[thick] (inH2) to[out=-90-\angio,in=90] (cmndH2);
% % % % % % % % % % % %
\draw[very thin] (cmndA1) to[out=180+\angmnd,in=\angmnd] (mndA4);
\draw[very thin] (cmndA11) to[out=-\angmnd,in=180-\angmnd] (mndA1);
\draw[very thin] (cmndA11) to[out=180+\angmnd,in=180-\angmnd] (act22);
%\draw[thick] (act1) to[out=-90,in=90] (cmndH2);
% % % % % % % % % % % % % % % %
\draw[thick, tension=1] (cmndH1) to[out=180+\angmnd,in=90](-3 * \strx,0) to[out=-90,in=180-\angmnd] (mndH1);
\draw[thick] (cmndH1) to[out=-\angmnd,in=\angmnd] (act21);
\draw[thick] (cmndH2) to[out=180+\angmnd,in=\angcross] (crossH);
\draw[thick] (crossH) to[out=180+\angcross,in=\angmnd] (act3);
\draw[thick] (cmndH2) to[out=-\angmnd,in=\angmnd] (act4);
\draw[very thin] (mndA4) to[out=-90,in=90] (cmndA3);
\draw[very thin] (cmndA3) to[out=180+\angmnd,in=180-\angmnd] (act3);
\draw[very thin] (cmndA3) to[out=-\angmnd,in=180-\angmnd] (mndA2);
\draw[thick] (act3) to[out=-90,in=\angmnd] (mndH1);
% % % % % % % % % % % % % % % % % % % %
\draw[thick] (mndH1) to[out=-90,in=90-\angio] (outH1);
% % % % % % % % % % % % % % % % % % % % %
\draw[very thin] (elt1) to[out=180,in=180-\angmnd] (act21);
\draw[very thin] (elt12) to[out=0,in=180-\angmnd] (mndA6);
\draw[thick] (act21) to[out=-90,in=180-\angcross] (crossH);
\draw[thick] (crossH) to[out=-\angcross,in=180-\angmnd] (mndH2);
\draw[thick] (act22) to[out=-90,in=\angmnd] (mndH2);
\draw[very thin] (elt1) to[out=0,in=180-\angmnd] (mndA4);
\draw[very thin] (elt12) to[out=180,in=180-\angmnd] (act4);
\draw[thick] (act4) to[out=-90,in=\angmnd] (act22);
\draw[very thin] (mndA6) to[out=-90,in=\angcross] (crossA);
\draw[very thin] (crossA) to[out=180+\angcross,in=\angmnd] (mndA2);
% % % % % % % % % % % % % % % % % % % % %
\draw[thick] (mndH2) to[out=-90,in=90+\angio] (outH2);
\draw[very thin] (mndA2) to[out=-90,in=90-\angio] (outA1);
\end{tikzpicture}
&=
%2
\begin{tikzpicture}[baseline=(current  bounding  box.center)]
\coordinate (inH1) at (-3 * \strx, 10 * \stry);
\coordinate (inA1) at (-1 * \strx, 10 * \stry);
\coordinate (inH2) at (1 * \strx, 10 * \stry);
\coordinate (inA2) at (3 * \strx, 10 * \stry);
\coordinate (cmndA1) at (-0.5 * \strx, 8 * \stry);
\coordinate (cmndA11) at (1 * \strx, 6 * \stry);
\coordinate (cmndH1) at (-2.5 * \strx, 8 * \stry);
\coordinate (cmndH2) at (1 * \strx, 8 * \stry);
\coordinate (act3) at (-2.5 * \strx, -0.5 * \stry);
\coordinate (mndA4) at (-2.5 * \strx, 3 * \stry);
\coordinate (mndH1) at (-2.5 * \strx, -8 * \stry);
\coordinate (mndH2) at (0.5 * \strx, -8 * \stry);
\coordinate (elt1) at (-3 * \strx, 6 * \stry);
\coordinate (cmndA3) at (-2.5 * \strx, 1 * \stry);
\coordinate (elt12) at (0.5 * \strx, 3 * \stry);
\coordinate (mndA6) at (1 * \strx, 1 * \stry);
\coordinate (act21) at (-3.5 * \strx, 2.5 * \stry);
\coordinate (act22) at (1.5 * \strx, -7 * \stry);
\coordinate (act4) at (2 * \strx, -4 * \stry);
\coordinate (mndA2) at (-0.5 * \strx, -7 * \stry);
\coordinate (outH1) at (-3 * \strx, -10 * \stry);
\coordinate (outA1) at (-1 * \strx, -10 * \stry);
\coordinate (outH2) at (1 * \strx, -10 * \stry);
\coordinate (outA2) at (3 * \strx, -10 * \stry);
\coordinate (mndA1) at (3 * \strx, -5 * \stry);
\coordinate (cmndA2) at (3 * \strx, 5 * \stry);
\draw[very thin] (cmndA2) to[out=-\angmnd,in=\angmnd] (mndA1);
\draw[very thin] (cmndA1) to[out=-\angmnd,in=90] (cmndA11);
\draw[very thin] (inA2) to[out=-90-\angio,in=90] (cmndA2);
\draw[very thin] (mndA1) to[out=-90,in=90+\angio] (outA2);
\draw[very thin] (cmndA2) to[out=180+\angmnd,in=\angmnd] (mndA6);
\draw[thick] (inH1) to[out=-90+\angio,in=90] (cmndH1);
\draw[very thin] (inA1) to[out=-90+\angio,in=90] (cmndA1);
\draw[thick] (inH2) to[out=-90-\angio,in=90] (cmndH2);
% % % % % % % % % % % %
\draw[very thin] (cmndA1) to[out=180+\angmnd,in=\angmnd] (mndA4);
\draw[very thin] (cmndA11) to[out=-\angmnd,in=180-\angmnd] (mndA1);
\draw[very thin] (cmndA11) to[out=180+\angmnd,in=180-\angmnd] (act22);
%\draw[thick] (act1) to[out=-90,in=90] (cmndH2);
% % % % % % % % % % % % % % % %
\draw[thick] (cmndH1) to[out=180+\angmnd,in=180-\angmnd] (mndH1);
\draw[thick] (cmndH1) to[out=-\angmnd,in=\angmnd] (act21);
\draw[thick] (cmndH2) to[out=180+\angmnd,in=\angmnd] (act3);
\draw[thick] (cmndH2) to[out=-\angmnd,in=\angmnd] (act4);
\draw[very thin] (mndA4) to[out=-90,in=90] (cmndA3);
\draw[very thin] (cmndA3) to[out=180+\angmnd,in=180-\angmnd] (act3);
\draw[very thin] (cmndA3) to[out=-\angmnd,in=180-\angmnd] (mndA2);
\draw[thick] (act3) to[out=-90,in=\angmnd] (mndH1);
% % % % % % % % % % % % % % % % % % % %
\draw[thick] (mndH1) to[out=-90,in=90-\angio] (outH1);
% % % % % % % % % % % % % % % % % % % % %
\draw[very thin] (elt1) to[out=180,in=180-\angmnd] (act21);
\draw[very thin] (elt12) to[out=0,in=180-\angmnd] (mndA6);
\draw[thick] (act21) to[out=-90,in=180-\angmnd] (mndH2);
\draw[thick] (act22) to[out=-90,in=\angmnd] (mndH2);
\draw[very thin] (elt1) to[out=0,in=180-\angmnd] (mndA4);
\draw[very thin] (elt12) to[out=180,in=180-\angmnd] (act4);
\draw[thick] (act4) to[out=-90,in=\angmnd] (act22);
\draw[very thin] (mndA6) to[out=-90,in=\angmnd] (mndA2);
% % % % % % % % % % % % % % % % % % % % %
\draw[thick] (mndH2) to[out=-90,in=90+\angio] (outH2);
\draw[very thin] (mndA2) to[out=-90,in=90-\angio] (outA1);
\end{tikzpicture}
&=
%3
\begin{tikzpicture}[baseline=(current  bounding  box.center)]
\coordinate (inH1) at (-3 * \strx, 10 * \stry);
\coordinate (inA1) at (-1 * \strx, 10 * \stry);
\coordinate (inH2) at (1 * \strx, 10 * \stry);
\coordinate (inA2) at (3 * \strx, 10 * \stry);
\coordinate (cmndA1) at (-0.5 * \strx, 8 * \stry);
\coordinate (cmndA11) at (1 * \strx, 6 * \stry);
\coordinate (cmndH1) at (-2.5 * \strx, 8 * \stry);
\coordinate (cmndH2) at (1 * \strx, 8 * \stry);
\coordinate (act3) at (-2.5 * \strx, -0.5 * \stry);
\coordinate (mndA4) at (-2.5 * \strx, 3 * \stry);
\coordinate (mndH1) at (-2.5 * \strx, -8 * \stry);
\coordinate (mndH2) at (0.5 * \strx, -8 * \stry);
\coordinate (elt1) at (-3 * \strx, 6 * \stry);
\coordinate (cmndA3) at (-2.5 * \strx, 1 * \stry);
\coordinate (elt12) at (0.5 * \strx, 3 * \stry);
\coordinate (mndA6) at (1 * \strx, 1 * \stry);
\coordinate (mndA3) at (0 * \strx, -3 * \stry);
\coordinate (act21) at (-3.5 * \strx, 2.5 * \stry);
\coordinate (act22) at (1.5 * \strx, -7 * \stry);
\coordinate (mndA2) at (-0.5 * \strx, -7 * \stry);
\coordinate (outH1) at (-3 * \strx, -10 * \stry);
\coordinate (outA1) at (-1 * \strx, -10 * \stry);
\coordinate (outH2) at (1 * \strx, -10 * \stry);
\coordinate (outA2) at (3 * \strx, -10 * \stry);
\coordinate (mndA1) at (3 * \strx, -5 * \stry);
\coordinate (cmndA2) at (3 * \strx, 5 * \stry);
\draw[very thin] (cmndA2) to[out=-\angmnd,in=\angmnd] (mndA1);
\draw[very thin] (cmndA1) to[out=-\angmnd,in=90] (cmndA11);
\draw[very thin] (inA2) to[out=-90-\angio,in=90] (cmndA2);
\draw[very thin] (mndA1) to[out=-90,in=90+\angio] (outA2);
\draw[very thin] (cmndA2) to[out=180+\angmnd,in=\angmnd] (mndA6);
\draw[thick] (inH1) to[out=-90+\angio,in=90] (cmndH1);
\draw[very thin] (inA1) to[out=-90+\angio,in=90] (cmndA1);
\draw[thick] (inH2) to[out=-90-\angio,in=90] (cmndH2);
% % % % % % % % % % % %
\draw[very thin] (cmndA1) to[out=180+\angmnd,in=\angmnd] (mndA4);
\draw[very thin] (cmndA11) to[out=-\angmnd,in=180-\angmnd] (mndA1);
\draw[very thin] (cmndA11) to[out=180+\angmnd,in=180-\angmnd] (mndA3);
%\draw[thick] (act1) to[out=-90,in=90] (cmndH2);
% % % % % % % % % % % % % % % %
\draw[thick] (cmndH1) to[out=180+\angmnd,in=180-\angmnd] (mndH1);
\draw[thick] (cmndH1) to[out=-\angmnd,in=\angmnd] (act21);
\draw[thick] (cmndH2) to[out=180+\angmnd,in=\angmnd] (act3);
\draw[thick] (cmndH2) to[out=-\angmnd,in=\angmnd] (act22);
\draw[very thin] (mndA4) to[out=-90,in=90] (cmndA3);
\draw[very thin] (cmndA3) to[out=180+\angmnd,in=180-\angmnd] (act3);
\draw[very thin] (cmndA3) to[out=-\angmnd,in=180-\angmnd] (mndA2);
\draw[thick] (act3) to[out=-90,in=\angmnd] (mndH1);
% % % % % % % % % % % % % % % % % % % %
\draw[thick] (mndH1) to[out=-90,in=90-\angio] (outH1);
% % % % % % % % % % % % % % % % % % % % %
\draw[very thin] (elt1) to[out=180,in=180-\angmnd] (act21);
\draw[very thin] (elt12) to[out=0,in=180-\angmnd] (mndA6);
\draw[thick] (act21) to[out=-90,in=180-\angmnd] (mndH2);
\draw[thick] (act22) to[out=-90,in=\angmnd] (mndH2);
\draw[very thin] (elt1) to[out=0,in=180-\angmnd] (mndA4);
\draw[very thin] (elt12) to[out=180,in=\angmnd] (mndA3);
\draw[very thin] (mndA3) to[out=-90,in=180-\angmnd] (act22);
\draw[very thin] (mndA6) to[out=-90,in=\angmnd] (mndA2);
% % % % % % % % % % % % % % % % % % % % %
\draw[thick] (mndH2) to[out=-90,in=90+\angio] (outH2);
\draw[very thin] (mndA2) to[out=-90,in=90-\angio] (outA1);
\end{tikzpicture}
\label{d1}\\
&=
%4
\begin{tikzpicture}[baseline=(current  bounding  box.center)]
\coordinate (inH1) at (-3 * \strx, 10 * \stry);
\coordinate (inA1) at (-1 * \strx, 10 * \stry);
\coordinate (inH2) at (1 * \strx, 10 * \stry);
\coordinate (inA2) at (3 * \strx, 10 * \stry);
\coordinate (cmndA1) at (-0.5 * \strx, 8 * \stry);
\coordinate (cmndA11) at (-1 * \strx, 7 * \stry);
\coordinate (cmndA4) at (-0.5 * \strx, 2 * \stry);
\coordinate (cmndH1) at (-2.5 * \strx, 8 * \stry);
\coordinate (cmndH2) at (1 * \strx, 8 * \stry);
\coordinate (act3) at (-2.5 * \strx, -0.5 * \stry);
\coordinate (mndA4) at (-2.5 * \strx, 1 * \stry);
\coordinate (mndH1) at (-2.5 * \strx, -8 * \stry);
\coordinate (mndH2) at (0.5 * \strx, -8 * \stry);
\coordinate (elt1) at (-3 * \strx, 6 * \stry);
\coordinate (cmndA3) at (-2.5 * \strx, 3 * \stry);
\coordinate (elt12) at (0.5 * \strx, 3 * \stry);
\coordinate (mndA6) at (-1 * \strx, -4 * \stry);
\coordinate (mndA3) at (0 * \strx, 0 * \stry);
\coordinate (act21) at (-3.5 * \strx, 2.5 * \stry);
\coordinate (act22) at (1.5 * \strx, -7 * \stry);
\coordinate (mndA2) at (-0.5 * \strx, -7 * \stry);
\coordinate (mndA5) at (-0.5 * \strx, -2 * \stry);
\coordinate (outH1) at (-3 * \strx, -10 * \stry);
\coordinate (outA1) at (-1 * \strx, -10 * \stry);
\coordinate (outH2) at (1 * \strx, -10 * \stry);
\coordinate (outA2) at (3 * \strx, -10 * \stry);
\coordinate (mndA1) at (3 * \strx, -5 * \stry);
\coordinate (cmndA2) at (3 * \strx, 5 * \stry);
\draw[very thin] (cmndA2) to[out=-\angmnd,in=\angmnd] (mndA1);
\draw[very thin] (cmndA1) to[out=-\angmnd,in=180-\angmnd] (mndA1);
\draw[very thin] (inA2) to[out=-90-\angio,in=90] (cmndA2);
\draw[very thin] (mndA1) to[out=-90,in=90+\angio] (outA2);
\draw[very thin] (cmndA2) to[out=180+\angmnd,in=\angmnd] (mndA2);
\draw[thick] (inH1) to[out=-90+\angio,in=90] (cmndH1);
\draw[very thin] (inA1) to[out=-90+\angio,in=90] (cmndA1);
\draw[thick] (inH2) to[out=-90-\angio,in=90] (cmndH2);
% % % % % % % % % % % %
\draw[very thin] (cmndA1) to[out=180+\angmnd,in=90] (cmndA11);
\draw[very thin] (cmndA11) to[out=-\angmnd,in=90] (cmndA4);
\draw[very thin] (cmndA4) to[out=180+\angmnd,in=180-\angmnd] (mndA5);
\draw[very thin] (cmndA4) to[out=-\angmnd,in=180-\angmnd] (mndA3);
\draw[very thin] (cmndA11) to[out=180+\angmnd,in=\angmnd] (mndA4);
%\draw[thick] (act1) to[out=-90,in=90] (cmndH2);
% % % % % % % % % % % % % % % %
\draw[thick] (cmndH1) to[out=180+\angmnd,in=180-\angmnd] (mndH1);
\draw[thick] (cmndH1) to[out=-\angmnd,in=\angmnd] (act21);
\draw[thick] (cmndH2) to[out=180+\angmnd,in=\angmnd] (act3);
\draw[thick] (cmndH2) to[out=-\angmnd,in=\angmnd] (act22);
\draw[very thin] (mndA4) to[out=-90,in=180-\angmnd] (act3);
\draw[very thin] (cmndA3) to[out=180+\angmnd,in=180-\angmnd] (mndA4);
\draw[very thin] (cmndA3) to[out=-\angmnd,in=180-\angmnd] (mndA6);
\draw[thick] (act3) to[out=-90,in=\angmnd] (mndH1);
% % % % % % % % % % % % % % % % % % % %
\draw[thick] (mndH1) to[out=-90,in=90-\angio] (outH1);
% % % % % % % % % % % % % % % % % % % % %
\draw[very thin] (elt1) to[out=180,in=180-\angmnd] (act21);
\draw[very thin] (elt12) to[out=0,in=\angmnd] (mndA5);
\draw[very thin] (mndA5) to[out=-90,in=\angmnd] (mndA6);
\draw[thick] (act21) to[out=-90,in=180-\angmnd] (mndH2);
\draw[thick] (act22) to[out=-90,in=\angmnd] (mndH2);
\draw[very thin] (elt1) to[out=0,in=90] (cmndA3);
\draw[very thin] (elt12) to[out=180,in=\angmnd] (mndA3);
\draw[very thin] (mndA3) to[out=-90,in=180-\angmnd] (act22);
\draw[very thin] (mndA6) to[out=-90,in=180-\angmnd] (mndA2);
% % % % % % % % % % % % % % % % % % % % %
\draw[thick] (mndH2) to[out=-90,in=90+\angio] (outH2);
\draw[very thin] (mndA2) to[out=-90,in=90-\angio] (outA1);
\end{tikzpicture}
&=
%5
\begin{tikzpicture}[baseline=(current  bounding  box.center)]
\coordinate (inH1) at (-3 * \strx, 10 * \stry);
\coordinate (inA1) at (-1 * \strx, 10 * \stry);
\coordinate (inH2) at (1 * \strx, 10 * \stry);
\coordinate (inA2) at (3 * \strx, 10 * \stry);
\coordinate (cmndA1) at (-0.5 * \strx, 8 * \stry);
\coordinate (cmndA11) at (-1 * \strx, 7 * \stry);
\coordinate (cmndA4) at (0.5 * \strx, 3 * \stry);
\coordinate (cmndH1) at (-2.5 * \strx, 8 * \stry);
\coordinate (cmndH2) at (1 * \strx, 8 * \stry);
\coordinate (act3) at (-2.5 * \strx, -0.5 * \stry);
\coordinate (mndA4) at (-2.5 * \strx, 1 * \stry);
\coordinate (mndH1) at (-2.5 * \strx, -8 * \stry);
\coordinate (mndH2) at (0.5 * \strx, -8 * \stry);
\coordinate (elt1) at (-3 * \strx, 6 * \stry);
\coordinate (cmndA3) at (-2.5 * \strx, 3 * \stry);
\coordinate (elt12) at (-0.5 * \strx, 2 * \stry);
\coordinate (mndA6) at (-1 * \strx, -4 * \stry);
\coordinate (mndA3) at (0.5 * \strx, -1.5 * \stry);
\coordinate (act21) at (-3.5 * \strx, 2.5 * \stry);
\coordinate (act22) at (1.5 * \strx, -7 * \stry);
\coordinate (mndA2) at (-0.5 * \strx, -7 * \stry);
\coordinate (mndA5) at (-1 * \strx, -1.5 * \stry);
\coordinate (outH1) at (-3 * \strx, -10 * \stry);
\coordinate (outA1) at (-1 * \strx, -10 * \stry);
\coordinate (outH2) at (1 * \strx, -10 * \stry);
\coordinate (outA2) at (3 * \strx, -10 * \stry);
\coordinate (mndA1) at (3 * \strx, -5 * \stry);
\coordinate (cmndA2) at (3 * \strx, 5 * \stry);
\draw[very thin] (cmndA2) to[out=-\angmnd,in=\angmnd] (mndA1);
\draw[very thin] (cmndA1) to[out=-\angmnd,in=180-\angmnd] (mndA1);
\draw[very thin] (inA2) to[out=-90-\angio,in=90] (cmndA2);
\draw[very thin] (mndA1) to[out=-90,in=90+\angio] (outA2);
\draw[very thin] (cmndA2) to[out=180+\angmnd,in=\angmnd] (mndA2);
\draw[thick] (inH1) to[out=-90+\angio,in=90] (cmndH1);
\draw[very thin] (inA1) to[out=-90+\angio,in=90] (cmndA1);
\draw[thick] (inH2) to[out=-90-\angio,in=90] (cmndH2);
% % % % % % % % % % % %
\draw[very thin] (cmndA1) to[out=180+\angmnd,in=90] (cmndA11);
\draw[very thin] (cmndA11) to[out=-\angmnd,in=90] (cmndA4);
\draw[very thin] (cmndA4) to[out=180+\angmnd,in=\angmnd] (mndA5);
\draw[very thin] (cmndA4) to[out=-\angmnd,in=\angmnd] (mndA3);
\draw[very thin] (cmndA11) to[out=180+\angmnd,in=\angmnd] (mndA4);
%\draw[thick] (act1) to[out=-90,in=90] (cmndH2);
% % % % % % % % % % % % % % % %
\draw[thick] (cmndH1) to[out=180+\angmnd,in=180-\angmnd] (mndH1);
\draw[thick] (cmndH1) to[out=-\angmnd,in=\angmnd] (act21);
\draw[thick] (cmndH2) to[out=180+\angmnd,in=\angmnd] (act3);
\draw[thick] (cmndH2) to[out=-\angmnd,in=\angmnd] (act22);
\draw[very thin] (mndA4) to[out=-90,in=180-\angmnd] (act3);
\draw[very thin] (cmndA3) to[out=180+\angmnd,in=180-\angmnd] (mndA4);
\draw[very thin] (cmndA3) to[out=-\angmnd,in=180-\angmnd] (mndA6);
\draw[thick] (act3) to[out=-90,in=\angmnd] (mndH1);
% % % % % % % % % % % % % % % % % % % %
\draw[thick] (mndH1) to[out=-90,in=90-\angio] (outH1);
% % % % % % % % % % % % % % % % % % % % %
\draw[very thin] (elt1) to[out=180,in=180-\angmnd] (act21);
\draw[very thin] (elt12) to[out=180,in=180-\angmnd] (mndA5);
\draw[very thin] (mndA5) to[out=-90,in=180-\angmnd] (act22);
\draw[thick] (act21) to[out=-90,in=180-\angmnd] (mndH2);
\draw[thick] (act22) to[out=-90,in=\angmnd] (mndH2);
\draw[very thin] (elt1) to[out=0,in=90] (cmndA3);
\draw[very thin] (elt12) to[out=0,in=180-\angmnd] (mndA3);
\draw[very thin] (mndA3) to[out=-90,in=\angmnd] (mndA6);
\draw[very thin] (mndA6) to[out=-90,in=180-\angmnd] (mndA2);
% % % % % % % % % % % % % % % % % % % % %
\draw[thick] (mndH2) to[out=-90,in=90+\angio] (outH2);
\draw[very thin] (mndA2) to[out=-90,in=90-\angio] (outA1);
\end{tikzpicture}
\label{d2}\\
&=
%6
\begin{tikzpicture}[baseline=(current  bounding  box.center)]
\coordinate (inH1) at (-3 * \strx, 10 * \stry);
\coordinate (inA1) at (-1 * \strx, 10 * \stry);
\coordinate (inH2) at (1 * \strx, 10 * \stry);
\coordinate (inA2) at (3 * \strx, 10 * \stry);
\coordinate (cmndA1) at (-0.5 * \strx, 8 * \stry);
\coordinate (cmndA11) at (-1 * \strx, 7 * \stry);
\coordinate (act11) at (-0.75 * \strx, 4 * \stry);
\coordinate (act12) at (1 * \strx, 2 * \stry);
\coordinate (cmndH1) at (-2.5 * \strx, 8 * \stry);
\coordinate (cmndH2) at (1 * \strx, 8 * \stry);
\coordinate (act3) at (-2.5 * \strx, -3 * \stry);
\coordinate (mndA4) at (-2.5 * \strx, 1 * \stry);
\coordinate (mndH1) at (-2.5 * \strx, -8 * \stry);
\coordinate (mndH2) at (0.5 * \strx, -8 * \stry);
\coordinate (elt1) at (-3 * \strx, 6 * \stry);
\coordinate (cmndA3) at (-2 * \strx, 3 * \stry);
\coordinate (elt12) at (0 * \strx, 2 * \stry);
\coordinate (mndA6) at (-1 * \strx, -2 * \stry);
\coordinate (act21) at (-3.5 * \strx, 2 * \stry);
\coordinate (act22) at (1.5 * \strx, -7 * \stry);
\coordinate (mndA2) at (-0.5 * \strx, -7 * \stry);
\coordinate (mndA5) at (0 * \strx, -4.5 * \stry);
\coordinate (outH1) at (-3 * \strx, -10 * \stry);
\coordinate (outA1) at (-1 * \strx, -10 * \stry);
\coordinate (outH2) at (1 * \strx, -10 * \stry);
\coordinate (outA2) at (3 * \strx, -10 * \stry);
\coordinate (mndA1) at (2 * \strx, 1 * \stry);
\coordinate (cmndA2) at (2 * \strx, -3 * \stry);
\draw[very thin] (inA2) to[out=-90-\angio,in=\angmnd] (mndA1);
\draw[very thin] (cmndA1) to[out=-\angmnd,in=180-\angmnd] (mndA1);
\draw[very thin] (mndA1) to[out=-90,in=90] (cmndA2);
\draw[very thin] (cmndA2) to[out=-\angmnd,in=90+\angio] (outA2);
\draw[very thin] (cmndA2) to[out=180+\angmnd,in=\angmnd] (mndA2);
\draw[thick] (inH1) to[out=-90+\angio,in=90] (cmndH1);
\draw[very thin] (inA1) to[out=-90+\angio,in=90] (cmndA1);
\draw[thick] (inH2) to[out=-90-\angio,in=90] (cmndH2);
% % % % % % % % % % % %
\draw[very thin] (cmndA1) to[out=180+\angmnd,in=90] (cmndA11);
\draw[very thin] (cmndA11) to[out=-\angmnd,in=\angmnd] (mndA5);
\draw[very thin] (cmndA11) to[out=180+\angmnd,in=\angmnd] (mndA4);
%\draw[thick] (act1) to[out=-90,in=90] (cmndH2);
% % % % % % % % % % % % % % % %
\draw[thick] (cmndH1) to[out=180+\angmnd,in=180-\angmnd] (mndH1);
\draw[thick] (cmndH1) to[out=-\angmnd,in=\angmnd] (act21);
\draw[thick] (cmndH2) to[out=180+\angmnd,in=\angmnd] (act3);
\draw[thick] (cmndH2) to[out=-\angmnd,in=\angmnd] (act22);
\draw[very thin] (mndA4) to[out=-90,in=180-\angmnd] (act3);
\draw[very thin] (cmndA3) to[out=180+\angmnd,in=180-\angmnd] (mndA4);
\draw[very thin] (cmndA3) to[out=-\angmnd,in=180-\angmnd] (mndA6);
\draw[thick] (act3) to[out=-90,in=\angmnd] (mndH1);
% % % % % % % % % % % % % % % % % % % %
\draw[thick] (mndH1) to[out=-90,in=90-\angio] (outH1);
% % % % % % % % % % % % % % % % % % % % %
\draw[very thin] (elt1) to[out=180,in=180-\angmnd] (act21);
\draw[very thin] (elt12) to[out=180,in=180-\angmnd] (mndA5);
\draw[very thin] (mndA5) to[out=-90,in=180-\angmnd] (act22);
\draw[thick] (act21) to[out=-90,in=180-\angmnd] (mndH2);
\draw[thick] (act22) to[out=-90,in=\angmnd] (mndH2);
\draw[very thin] (elt1) to[out=0,in=90] (cmndA3);
\draw[very thin] (elt12) to[out=0,in=\angmnd] (mndA6);
\draw[very thin] (mndA6) to[out=-90,in=180-\angmnd] (mndA2);
% % % % % % % % % % % % % % % % % % % % %
\draw[thick] (mndH2) to[out=-90,in=90+\angio] (outH2);
\draw[very thin] (mndA2) to[out=-90,in=90-\angio] (outA1);
\end{tikzpicture}
&=
%7
\begin{tikzpicture}[baseline=(current  bounding  box.center)]
\coordinate (inH1) at (-3 * \strx, 10 * \stry);
\coordinate (inA1) at (-1 * \strx, 10 * \stry);
\coordinate (inH2) at (1 * \strx, 10 * \stry);
\coordinate (inA2) at (3 * \strx, 10 * \stry);
\coordinate (cmndA1) at (-0.5 * \strx, 8 * \stry);
\coordinate (cmndA11) at (-1 * \strx, 7 * \stry);
\coordinate (act11) at (-0.75 * \strx, 4 * \stry);
\coordinate (act12) at (1 * \strx, 2 * \stry);
\coordinate (cmndH1) at (-2.5 * \strx, 8 * \stry);
\coordinate (cmndH2) at (1 * \strx, 8 * \stry);
\coordinate (elt2) at (-3 * \strx, 5 * \stry);
\coordinate (act3) at (-2.5 * \strx, -3 * \stry);
\coordinate (act4) at (-2.25 * \strx, 3 * \stry);
\coordinate (mndA4) at (-2.5 * \strx, 1 * \stry);
\coordinate (mndH1) at (-2.5 * \strx, -8 * \stry);
\coordinate (mndH2) at (0.5 * \strx, -8 * \stry);
\coordinate (elt1) at (-3 * \strx, 6 * \stry);
\coordinate (elt12) at (0 * \strx, 2 * \stry);
\coordinate (mndA6) at (-1 * \strx, -2 * \stry);
\coordinate (mndA3) at (-3.5 * \strx, 3 * \stry);
\coordinate (act21) at (-3.5 * \strx, 2 * \stry);
\coordinate (act22) at (1.5 * \strx, -7 * \stry);
\coordinate (mndA2) at (-0.5 * \strx, -7 * \stry);
\coordinate (mndA5) at (0 * \strx, -4.5 * \stry);
\coordinate (outH1) at (-3 * \strx, -10 * \stry);
\coordinate (outA1) at (-1 * \strx, -10 * \stry);
\coordinate (outH2) at (1 * \strx, -10 * \stry);
\coordinate (outA2) at (3 * \strx, -10 * \stry);
\coordinate (mndA1) at (2 * \strx, 1 * \stry);
\coordinate (cmndA2) at (2 * \strx, -3 * \stry);
\draw[very thin] (inA2) to[out=-90-\angio,in=\angmnd] (mndA1);
\draw[very thin] (cmndA1) to[out=-\angmnd,in=180-\angmnd] (mndA1);
\draw[very thin] (mndA1) to[out=-90,in=90] (cmndA2);
\draw[very thin] (cmndA2) to[out=-\angmnd,in=90+\angio] (outA2);
\draw[very thin] (cmndA2) to[out=180+\angmnd,in=\angmnd] (mndA2);
\draw[thick] (inH1) to[out=-90+\angio,in=90] (cmndH1);
\draw[very thin] (inA1) to[out=-90+\angio,in=90] (cmndA1);
\draw[thick] (inH2) to[out=-90-\angio,in=90] (cmndH2);
% % % % % % % % % % % %
\draw[very thin] (cmndA1) to[out=180+\angmnd,in=90] (cmndA11);
\draw[very thin] (cmndA11) to[out=-\angmnd,in=\angmnd] (mndA5);
\draw[very thin] (cmndA11) to[out=180+\angmnd,in=\angmnd] (mndA4);
%\draw[thick] (act1) to[out=-90,in=90] (cmndH2);
% % % % % % % % % % % % % % % %
\draw[thick] (cmndH1) to[out=180+\angmnd,in=180-\angmnd] (mndH1);
\draw[thick] (cmndH1) to[out=-\angmnd,in=\angmnd] (act21);
\draw[thick] (cmndH2) to[out=180+\angmnd,in=\angmnd] (act3);
\draw[thick] (cmndH2) to[out=-\angmnd,in=\angmnd] (act22);
\draw[very thin] (mndA4) to[out=-90,in=180-\angmnd] (act3);
\draw[very thin] (elt2) to[out=0,in=180-\angmnd] (mndA4);
\draw[very thin] (elt2) to[out=180,in=\angmnd] (mndA3);
\draw[thick] (act3) to[out=-90,in=\angmnd] (mndH1);
\draw[very thin] (mndA3) to[out=-90,in=180-\angmnd] (act21);
% % % % % % % % % % % % % % % % % % % %
\draw[thick] (mndH1) to[out=-90,in=90-\angio] (outH1);
% % % % % % % % % % % % % % % % % % % % %
\draw[very thin] (elt1) to[out=180,in=180-\angmnd] (mndA3);
\draw[very thin] (elt12) to[out=180,in=180-\angmnd] (mndA5);
\draw[very thin] (mndA5) to[out=-90,in=180-\angmnd] (act22);
\draw[thick] (act21) to[out=-90,in=180-\angmnd] (mndH2);
\draw[thick] (act22) to[out=-90,in=\angmnd] (mndH2);
\draw[very thin] (elt1) to[out=0,in=180-\angmnd] (mndA6);
\draw[very thin] (elt12) to[out=0,in=\angmnd] (mndA6);
\draw[very thin] (mndA6) to[out=-90,in=180-\angmnd] (mndA2);
% % % % % % % % % % % % % % % % % % % % %
\draw[thick] (mndH2) to[out=-90,in=90+\angio] (outH2);
\draw[very thin] (mndA2) to[out=-90,in=90-\angio] (outA1);
\end{tikzpicture}
\label{d3}\\
&=
%8
\begin{tikzpicture}[baseline=(current  bounding  box.center)]
\coordinate (inH1) at (-3 * \strx, 10 * \stry);
\coordinate (inA1) at (-1 * \strx, 10 * \stry);
\coordinate (inH2) at (1 * \strx, 10 * \stry);
\coordinate (inA2) at (3 * \strx, 10 * \stry);
\coordinate (cmndA1) at (-0.5 * \strx, 8 * \stry);
\coordinate (cmndA11) at (-1 * \strx, 7 * \stry);
\coordinate (act11) at (-0.75 * \strx, 4 * \stry);
\coordinate (act12) at (1 * \strx, 2 * \stry);
\coordinate (cmndH1) at (-2.5 * \strx, 8 * \stry);
\coordinate (cmndH2) at (1 * \strx, 8 * \stry);
\coordinate (elt2) at (-2.5 * \strx, 5 * \stry);
\coordinate (act3) at (-0.5 * \strx, -1 * \stry);
\coordinate (act4) at (-2.25 * \strx, 3 * \stry);
\coordinate (mndA4) at (-2 * \strx, 2 * \stry);
\coordinate (mndH1) at (-2.5 * \strx, -8 * \stry);
\coordinate (mndH2) at (0.5 * \strx, -8 * \stry);
\coordinate (elt1) at (-3.5 * \strx, 2 * \stry);
\coordinate (elt12) at (-2.5 * \strx, 1 * \stry);
\coordinate (mndA6) at (-2 * \strx, -1 * \stry);
\coordinate (mndA3) at (-3.5 * \strx, -2 * \stry);
\coordinate (act21) at (-3 * \strx, -4 * \stry);
\coordinate (act22) at (1.5 * \strx, -7 * \stry);
\coordinate (mndA2) at (-0.5 * \strx, -7 * \stry);
\coordinate (mndA5) at (0 * \strx, -4.5 * \stry);
\coordinate (outH1) at (-3 * \strx, -10 * \stry);
\coordinate (outA1) at (-1 * \strx, -10 * \stry);
\coordinate (outH2) at (1 * \strx, -10 * \stry);
\coordinate (outA2) at (3 * \strx, -10 * \stry);
\coordinate (mndA1) at (2 * \strx, 1 * \stry);
\coordinate (cmndA2) at (2 * \strx, -3 * \stry);
\draw[very thin] (inA2) to[out=-90-\angio,in=\angmnd] (mndA1);
\draw[very thin] (cmndA1) to[out=-\angmnd,in=180-\angmnd] (mndA1);
\draw[very thin] (mndA1) to[out=-90,in=90] (cmndA2);
\draw[very thin] (cmndA2) to[out=-\angmnd,in=90+\angio] (outA2);
\draw[very thin] (cmndA2) to[out=180+\angmnd,in=\angmnd] (mndA2);
\draw[thick] (inH1) to[out=-90+\angio,in=90] (cmndH1);
\draw[very thin] (inA1) to[out=-90+\angio,in=90] (cmndA1);
\draw[thick] (inH2) to[out=-90-\angio,in=90] (cmndH2);
% % % % % % % % % % % %
\draw[very thin] (cmndA1) to[out=180+\angmnd,in=90] (cmndA11);
\draw[very thin] (cmndA11) to[out=-\angmnd,in=\angmnd] (mndA5);
\draw[very thin] (cmndA11) to[out=180+\angmnd,in=\angmnd] (mndA4);
%\draw[thick] (act1) to[out=-90,in=90] (cmndH2);
% % % % % % % % % % % % % % % %
\draw[thick] (cmndH1) to[out=180+\angmnd,in=180-\angmnd] (mndH1);
\draw[thick] (cmndH1) to[out=-\angmnd,in=\angmnd] (act21);
\draw[thick] (cmndH2) to[out=180+\angmnd,in=\angmnd] (act3);
\draw[thick] (cmndH2) to[out=-\angmnd,in=\angmnd] (act22);
\draw[very thin] (mndA4) to[out=-90,in=180-\angmnd] (act3);
\draw[very thin] (elt2) to[out=0,in=180-\angmnd] (mndA4);
\draw[very thin] (elt2) to[out=180,in=\angmnd] (mndA3);
\draw[thick] (act3) to[out=-90,in=\angmnd] (mndH1);
\draw[very thin] (mndA3) to[out=-90,in=180-\angmnd] (act21);
% % % % % % % % % % % % % % % % % % % %
\draw[thick] (mndH1) to[out=-90,in=90-\angio] (outH1);
% % % % % % % % % % % % % % % % % % % % %
\draw[very thin] (elt1) to[out=180,in=180-\angmnd] (mndA3);
\draw[very thin] (elt12) to[out=180,in=180-\angmnd] (mndA5);
\draw[very thin] (mndA5) to[out=-90,in=180-\angmnd] (act22);
\draw[thick] (act21) to[out=-90,in=180-\angmnd] (mndH2);
\draw[thick] (act22) to[out=-90,in=\angmnd] (mndH2);
\draw[very thin] (elt1) to[out=0,in=180-\angmnd] (mndA6);
\draw[very thin] (elt12) to[out=0,in=\angmnd] (mndA6);
\draw[very thin] (mndA6) to[out=-90,in=180-\angmnd] (mndA2);
% % % % % % % % % % % % % % % % % % % % %
\draw[thick] (mndH2) to[out=-90,in=90+\angio] (outH2);
\draw[very thin] (mndA2) to[out=-90,in=90-\angio] (outA1);
\end{tikzpicture}
&=
%9
\begin{tikzpicture}[baseline=(current  bounding  box.center)]
\coordinate (inH1) at (-3 * \strx, 10 * \stry);
\coordinate (inA1) at (-1 * \strx, 10 * \stry);
\coordinate (inH2) at (1 * \strx, 10 * \stry);
\coordinate (inA2) at (3 * \strx, 10 * \stry);
\coordinate (cmndA1) at (-0.5 * \strx, 8 * \stry);
\coordinate (cmndA11) at (-1 * \strx, 7 * \stry);
\coordinate (act11) at (-0.75 * \strx, 4 * \stry);
\coordinate (act12) at (1 * \strx, 2 * \stry);
\coordinate (cmndH1) at (-2.5 * \strx, 8 * \stry);
\coordinate (cmndH2) at (1 * \strx, 8 * \stry);
\coordinate (elt2) at (-2.5 * \strx, 5 * \stry);
\coordinate (act3) at (-0.5 * \strx, -1 * \stry);
\coordinate (act4) at (-2.25 * \strx, 3 * \stry);
\coordinate (mndA4) at (-2 * \strx, 2 * \stry);
\coordinate (mndH1) at (-2.5 * \strx, -8 * \stry);
\coordinate (mndH2) at (0.5 * \strx, -8 * \stry);
\coordinate (elt1) at (-3.5 * \strx, 2 * \stry);
\coordinate (cmndA3) at (-3.5 * \strx, -1 * \stry);
\coordinate (mndA3) at (-3.5 * \strx, -2 * \stry);
\coordinate (act21) at (-3 * \strx, -4 * \stry);
\coordinate (act22) at (1.5 * \strx, -7 * \stry);
\coordinate (mndA2) at (-0.5 * \strx, -7 * \stry);
\coordinate (mndA5) at (0 * \strx, -3.5 * \stry);
\coordinate (outH1) at (-3 * \strx, -10 * \stry);
\coordinate (outA1) at (-1 * \strx, -10 * \stry);
\coordinate (outH2) at (1 * \strx, -10 * \stry);
\coordinate (outA2) at (3 * \strx, -10 * \stry);
\coordinate (mndA1) at (2 * \strx, 1 * \stry);
\coordinate (cmndA2) at (2 * \strx, -3 * \stry);
\draw[very thin] (inA2) to[out=-90-\angio,in=\angmnd] (mndA1);
\draw[very thin] (cmndA1) to[out=-\angmnd,in=180-\angmnd] (mndA1);
\draw[very thin] (mndA1) to[out=-90,in=90] (cmndA2);
\draw[very thin] (cmndA2) to[out=-\angmnd,in=90+\angio] (outA2);
\draw[very thin] (cmndA2) to[out=180+\angmnd,in=\angmnd] (mndA2);
\draw[thick] (inH1) to[out=-90+\angio,in=90] (cmndH1);
\draw[very thin] (inA1) to[out=-90+\angio,in=90] (cmndA1);
\draw[thick] (inH2) to[out=-90-\angio,in=90] (cmndH2);
% % % % % % % % % % % %
\draw[very thin] (cmndA1) to[out=180+\angmnd,in=90] (cmndA11);
\draw[very thin] (cmndA11) to[out=-\angmnd,in=\angmnd] (mndA5);
\draw[very thin] (cmndA11) to[out=180+\angmnd,in=\angmnd] (mndA4);
% % % % % % % % % % % % % % % %
\draw[thick] (cmndH1) to[out=180+\angmnd,in=180-\angmnd] (mndH1);
\draw[thick] (cmndH1) to[out=-\angmnd,in=\angmnd] (act21);
\draw[thick] (cmndH2) to[out=180+\angmnd,in=\angmnd] (act3);
\draw[thick] (cmndH2) to[out=-\angmnd,in=\angmnd] (act22);
\draw[very thin] (mndA4) to[out=-90,in=180-\angmnd] (act3);
\draw[very thin] (elt2) to[out=0,in=180-\angmnd] (mndA4);
\draw[very thin] (elt2) to[out=180,in=\angmnd] (mndA3);
\draw[thick] (act3) to[out=-90,in=\angmnd] (mndH1);
\draw[very thin] (mndA3) to[out=-90,in=180-\angmnd] (act21);
% % % % % % % % % % % % % % % % % % % %
\draw[thick] (mndH1) to[out=-90,in=90-\angio] (outH1);
% % % % % % % % % % % % % % % % % % % % %
\draw[very thin] (elt1) to[out=180,in=90] (cmndA3);
\draw[very thin] (cmndA3) to[out=180+\angmnd,in=180-\angmnd] (mndA3);
\draw[very thin] (cmndA3) to[out=-\angmnd,in=180-\angmnd] (mndA5);
\draw[very thin] (mndA5) to[out=-90,in=180-\angmnd] (act22);
\draw[thick] (act21) to[out=-90,in=180-\angmnd] (mndH2);
\draw[thick] (act22) to[out=-90,in=\angmnd] (mndH2);
\draw[very thin] (elt1) to[out=0,in=180-\angmnd] (mndA2);
% % % % % % % % % % % % % % % % % % % % %
\draw[thick] (mndH2) to[out=-90,in=90+\angio] (outH2);
\draw[very thin] (mndA2) to[out=-90,in=90-\angio] (outA1);
\end{tikzpicture}
\label{d4}\\
&=
%10
\begin{tikzpicture}[baseline=(current  bounding  box.center)]
\coordinate (inH1) at (-3 * \strx, 10 * \stry);
\coordinate (inA1) at (-1 * \strx, 10 * \stry);
\coordinate (inH2) at (1 * \strx, 10 * \stry);
\coordinate (inA2) at (3 * \strx, 10 * \stry);
\coordinate (cmndA1) at (-0.5 * \strx, 8 * \stry);
\coordinate (cmndA11) at (-1 * \strx, 7 * \stry);
\coordinate (act11) at (-0.75 * \strx, 4 * \stry);
\coordinate (act12) at (1 * \strx, 2 * \stry);
\coordinate (cmndH1) at (-2.5 * \strx, 8 * \stry);
\coordinate (cmndH2) at (1 * \strx, 8 * \stry);
\coordinate (cmndA2) at (2 * \strx, 2 * \stry);
\coordinate (elt2) at (-2.5 * \strx, 5 * \stry);
\coordinate (act3) at (-1 * \strx, -0.5 * \stry);
\coordinate (act4) at (-2.25 * \strx, 3 * \stry);
\coordinate (mndH1) at (-2.5 * \strx, -8 * \stry);
\coordinate (mndH2) at (0.5 * \strx, -8 * \stry);
\coordinate (mndA1) at (2 * \strx, -2 * \stry);
\coordinate (elt1) at (-3 * \strx, 1 * \stry);
\coordinate (cmndA3) at (-3.5 * \strx, -1 * \stry);
\coordinate (act21) at (-3 * \strx, -4 * \stry);
\coordinate (act22) at (0.5 * \strx, -4 * \stry);
\coordinate (mndA2) at (-0.5 * \strx, -7 * \stry);
\coordinate (outH1) at (-3 * \strx, -10 * \stry);
\coordinate (outA1) at (-1 * \strx, -10 * \stry);
\coordinate (outH2) at (1 * \strx, -10 * \stry);
\coordinate (outA2) at (3 * \strx, -10 * \stry);
\coordinate (mndA1) at (2 * \strx, 1 * \stry);
\coordinate (cmndA2) at (2 * \strx, -3 * \stry);
\draw[very thin] (inA2) to[out=-90-\angio,in=\angmnd] (mndA1);
\draw[very thin] (cmndA1) to[out=-\angmnd,in=180-\angmnd] (mndA1);
\draw[very thin] (mndA1) to[out=-90,in=90] (cmndA2);
\draw[very thin] (cmndA2) to[out=-\angmnd,in=90+\angio] (outA2);
\draw[very thin] (cmndA2) to[out=180+\angmnd,in=\angmnd] (mndA2);
\draw[thick] (inH1) to[out=-90+\angio,in=90] (cmndH1);
\draw[very thin] (inA1) to[out=-90+\angio,in=90] (cmndA1);
\draw[thick] (inH2) to[out=-90-\angio,in=90] (cmndH2);
% % % % % % % % % % % %
\draw[very thin] (cmndA1) to[out=180+\angmnd,in=90] (cmndA11);
\draw[very thin] (cmndA11) to[out=-\angmnd,in=180-\angmnd] (act12);
\draw[very thin] (cmndA11) to[out=180+\angmnd,in=180-\angmnd] (act11);
%\draw[thick] (act1) to[out=-90,in=90] (cmndH2);
% % % % % % % % % % % % % % % %
\draw[thick] (cmndH1) to[out=180+\angmnd,in=180-\angmnd] (mndH1);
\draw[thick] (cmndH1) to[out=-\angmnd,in=\angmnd] (act4);
\draw[thick] (cmndH2) to[out=180+\angmnd,in=\angmnd] (act11);
\draw[thick] (cmndH2) to[out=-\angmnd,in=\angmnd] (act12);
\draw[thick] (act12) to[out=-90,in=\angmnd] (act22);
\draw[thick] (act11) to[out=-90,in=\angmnd] (act3);
\draw[very thin] (elt2) to[out=0,in=180-\angmnd] (act3);
\draw[very thin] (elt2) to[out=180,in=180-\angmnd] (act4);
\draw[thick] (act3) to[out=-90,in=\angmnd] (mndH1);
\draw[thick] (act4) to[out=-90,in=\angmnd] (act21);
% % % % % % % % % % % % % % % % % % % % % % %
\draw[thick] (mndH1) to[out=-90,in=90-\angio] (outH1);
% % % % % % % % % % % % % % % % % % % % %
\draw[very thin] (elt1) to[out=180,in=90] (cmndA3);
\draw[very thin] (cmndA3) to[out=180+\angmnd,in=180-\angmnd] (act21);
\draw[very thin] (cmndA3) to[out=-\angmnd,in=180-\angmnd] (act22);
\draw[thick] (act21) to[out=-90,in=180-\angmnd] (mndH2);
\draw[thick] (act22) to[out=-90,in=\angmnd] (mndH2);
\draw[very thin] (elt1) to[out=0,in=180-\angmnd] (mndA2);
% % % % % % % % % % % % % % % % % % % % %
\draw[thick] (mndH2) to[out=-90,in=90+\angio] (outH2);
\draw[very thin] (mndA2) to[out=-90,in=90-\angio] (outA1);
\end{tikzpicture}
&=
%11
\begin{tikzpicture} [baseline=(current  bounding  box.center)]
\coordinate (inH1) at (-3 * \strx, 10 * \stry);
\coordinate (inA1) at (-1 * \strx, 10 * \stry);
\coordinate (inH2) at (1 * \strx, 10 * \stry);
\coordinate (inA2) at (3 * \strx, 10 * \stry);
\coordinate (cmndA1) at (-0.5 * \strx, 7 * \stry);
\coordinate (act1) at (-0.5 * \strx, 5 * \stry);
\coordinate (cmndH1) at (-2.5 * \strx, 3 * \stry);
\coordinate (cmndH2) at (-1 * \strx, 3 * \stry);
\coordinate (elt2) at (-2.5 * \strx, 1.5 * \stry);
\coordinate (act3) at (-1.25 * \strx, -0.5 * \stry);
\coordinate (act4) at (-2.25 * \strx, -0.5 * \stry);
\coordinate (mndH1) at (-2.5 * \strx, -3 * \stry);
\coordinate (mndH2) at (-1 * \strx, -3 * \stry);
\coordinate (elt1) at (-1.5 * \strx, -5 * \stry);
\coordinate (act2) at (-1 * \strx, -7 * \stry);
\coordinate (mndA2) at (0.5 * \strx, -7 * \stry);
\coordinate (outH1) at (-3 * \strx, -10 * \stry);
\coordinate (outA1) at (-1 * \strx, -10 * \stry);
\coordinate (outH2) at (1 * \strx, -10 * \stry);
\coordinate (outA2) at (3 * \strx, -10 * \stry);
\coordinate (mndA1) at (1 * \strx, 1 * \stry);
\coordinate (cmndA2) at (1 * \strx, -1 * \stry);
\draw[very thin] (inA2) to[out=-90-\angio,in=\angmnd] (mndA1);
\draw[very thin] (cmndA1) to[out=-\angmnd,in=180-\angmnd] (mndA1);
\draw[very thin] (mndA1) to[out=-90,in=90] (cmndA2);
\draw[very thin] (cmndA2) to[out=-\angmnd,in=90+\angio] (outA2);
\draw[very thin] (cmndA2) to[out=180+\angmnd,in=\angmnd] (mndA2);
\draw[thick] (inH1) to[out=-90+\angio,in=90] (cmndH1);
\draw[very thin] (inA1) to[out=-90+\angio,in=90] (cmndA1);
\draw[thick] (inH2) to[out=-90-\angio,in=\angmnd] (act1);
% % % % % % % % % % % %
\draw[very thin] (cmndA1) to[out=180+\angmnd,in=180-\angmnd] (act1);
% % % % % % % % % % % % % % % %
\draw[thick] (act1) to[out=-90,in=90] (cmndH2);
% % % % % % % % % % % % % % % %
\draw[thick] (cmndH1) to[out=180+\angmnd,in=180-\angmnd] (mndH1);
\draw[thick] (cmndH1) to[out=-\angmnd,in=\angmnd] (act4);
\draw[thick] (cmndH2) to[out=-\angmnd,in=\angmnd] (mndH2);
\draw[thick] (cmndH2) to[out=180+\angmnd,in=\angmnd] (act3);
\draw[very thin] (elt2) to[out=0,in=180-\angmnd] (act3);
\draw[very thin] (elt2) to[out=180,in=180-\angmnd] (act4);
\draw[thick] (act3) to[out=-90,in=\angmnd] (mndH1);
\draw[thick] (act4) to[out=-90,in=180-\angmnd] (mndH2);
% % % % % % % % % % % % % % % % % % % %
\draw[thick] (mndH1) to[out=-90,in=90-\angio] (outH1);
\draw[thick] (mndH2) to[out=-90,in=\angmnd] (act2);
% % % % % % % % % % % % % % % % % % % % %
\draw[very thin] (elt1) to[out=180,in=180-\angmnd] (act2);
\draw[very thin] (elt1) to[out=0,in=180-\angmnd] (mndA2);
% % % % % % % % % % % % % % % % % % % % %
\draw[thick] (act2) to[out=-90,in=90+\angio] (outH2);
\draw[very thin] (mndA2) to[out=-90,in=90-\angio] (outA1);
\end{tikzpicture}
\label{d5}\\
&=
%12
\begin{tikzpicture}[baseline=(current  bounding  box.center)]
\coordinate (inH1) at (-3 * \strx, 10 * \stry);
\coordinate (inA1) at (-1 * \strx, 10 * \stry);
\coordinate (inH2) at (1 * \strx, 10 * \stry);
\coordinate (inA2) at (3 * \strx, 10 * \stry);
\coordinate (mndH) at (-1 * \strx, 1 * \stry);
\coordinate (cmndA1) at (-0.5 * \strx, 6 * \stry);
\coordinate (act1) at (-0.5 * \strx, 4 * \stry);
\coordinate (mndA1) at (1 * \strx, 1 * \stry);
\coordinate (cmndA2) at (1 * \strx, -1 * \stry);
\coordinate (cmndH) at (-1 * \strx, -1 * \stry);
\coordinate (elt) at (-1 * \strx, -3 * \stry);
\coordinate (act2) at (-0.5 * \strx, -6 * \stry);
\coordinate (mndA2) at (0.5 * \strx, -6 * \stry);
\coordinate (outH1) at (-3 * \strx, -10 * \stry);
\coordinate (outA1) at (-1 * \strx, -10 * \stry);
\coordinate (outH2) at (1 * \strx, -10 * \stry);
\coordinate (outA2) at (3 * \strx, -10 * \stry);
\draw[thick] (inH1) to[out=-90+\angio,in=180-\angmnd] (mndH);
\draw[very thin] (inA1) to[out=-90+\angio,in=90] (cmndA1);
\draw[thick] (inH2) to[out=-90-\angio,in=\angmnd] (act1);
\draw[very thin] (inA2) to[out=-90-\angio,in=\angmnd] (mndA1);
% % % % % % % % % % % %
\draw[very thin] (cmndA1) to[out=180+\angmnd,in=180-\angmnd] (act1);
\draw[very thin] (cmndA1) to[out=-\angmnd,in=180-\angmnd] (mndA1);
% % % % % % % % % % % % % % % %
\draw[thick] (act1) to[out=-90,in=\angmnd] (mndH);
% % % % % % % % % % % % % % % %
\draw[thick] (mndH) to[out=-90,in=90] (cmndH);
\draw[very thin] (mndA1) to[out=-90,in=90] (cmndA2);
% % % % % % % % % % % % % % % % % % % %
\draw[thick] (cmndH) to[out=180+\angmnd,in=90-\angio] (outH1);
\draw[thick] (cmndH) to[out=-\angmnd,in=\angmnd] (act2);
\draw[very thin] (cmndA2) to[out=180+\angmnd,in=\angmnd] (mndA2);
\draw[very thin] (cmndA2) to[out=-\angmnd,in=90+\angio] (outA2);
% % % % % % % % % % % % % % % % % % % % %
\draw[very thin] (elt) to[out=180,in=180-\angmnd] (act2);
\draw[very thin] (elt) to[out=0,in=180-\angmnd] (mndA2);
% % % % % % % % % % % % % % % % % % % % %
\draw[thick] (act2) to[out=-90,in=90+\angio] (outH2);
\draw[very thin] (mndA2) to[out=-90,in=90-\angio] (outA1);
\end{tikzpicture}
& \label{d6}
\end{align}
\caption{Diagrams used in the proof of Proposition \ref{prop:semi}. Thick lines stand for $H$, thin lines for $A$, and every branching uniquely determines a relevant arrow in $\mathcal{V}$.}
\end{figure}

The defined (co)multiplication is already part of a (co)monoid structure. The compatibility of counit with unit, counit with multiplication and unit with comultiplication follows directly. What remains to show is the bimonoid axiom, which we have done using the manipulation of string diagrams shown in Figure \ref{fig:diagramsSemi} and described below.

In line (\ref{d1}), after rearrangement we used the compatibility of comultiplication of $A$ with coaction of $A$ on $H$, in the bottom right corner of the middle diagram.

Going from line (\ref{d1}) to line (\ref{d2}) we used the bimonoid axiom for $A$ on the top-left part of the diagram, followed by the (co)associativity for $A$. In the line (\ref{d2}) we used the braiding coelement axiom (\ref{ax:elt1}). When passing from line (\ref{d2}) to line (\ref{d3}) we used the (co)associativity for $A$, together with the bimonoid axiom for $A$, in the right side of the diagram. In the line (\ref{d3}) we used the braiding coelement axiom (\ref{ax:elt2}) on the top-left. Line (\ref{d3}) to (\ref{d4}) involves just a rearrangement, followed by the braiding coelement axiom (\ref{ax:elt3}), on the left of the diagram, in line (\ref{d4}).

Passing from line (\ref{d4}) to line (\ref{d5}) uses the compatibility of comultiplication of $A$ with coaction of $A$ on $H$ at three different places. In line (\ref{d5}) we used that the (co)multiplication of $H$ is an $A$-comodule morphism. Finally, going from line (\ref{d5}) to line (\ref{d6}) uses the bimonoid axiom for $H$.

That (\ref{ap}) is indeed an antipode follows in a similar way. The strategy to show the ``right inverse'' axiom is to use the compatibility of $\alpha_H$ with $\delta_A$, and bimonoid axioms to get all multiplications to the top, and comultiplications to the bottom of the diagram, and then use the right inverse axiom for $A$ multiple times, followed by the right inverse axiom for $H$. The strategy to show the ``left inverse'' axiom is to bring all coactions $\alpha_H$ below $\delta_H$, using the definition of coaction on the product of comodules, followed by the left inverse axiom for $H$, followed by the compatibility of $\eta_A$ with $\delta_A$, and the left inverse axiom for $A$. 
\end{proof}

\begin{prop}
The comparison functor
\begin{align}
\mathrm{Comod}_{\mathcal{W}}(H)&\xrightarrow{F} \mathrm{Comod}_{\mathcal{V}}(H\rtimes A)\\
((B,\alpha_B),\chi_B)&\mapsto (B, 1_H\alpha_B\circ \chi_B) \nonumber \\
(f:C\rightarrow B)&\mapsto (f:C\rightarrow B) \nonumber
\end{align}
is strict monoidal and has a strict monoidal inverse
\begin{align}
\mathrm{Comod}_{\mathcal{V}}(H\rtimes A)&\xrightarrow{F^{-1}}\mathrm{Comod}_{\mathcal{W}}(H) \\
(B,\beta)&\mapsto ((B,\lambda_{AB}\circ\epsilon_H1_A\circ\beta),1_H\lambda_{B}\circ 1_H\epsilon_A\circ\beta) \nonumber \\
(f:C\rightarrow B)&\mapsto (f:C\rightarrow B)\,. \nonumber
\end{align}
\end{prop}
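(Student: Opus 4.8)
The plan is to recognise $F$ and $F^{-1}$ as the comparison isomorphisms attached to the distributive law $\tau_H$. Since the comonoid structure of $H\rtimes A$ is, by definition, the composite comonad $(H\otimes-)\circ(A\otimes-)$ assembled from $\tau_H$, the dual of Beck's theorem on distributive laws (the $(\mathrm{co})$monadicity of composites discussed in \cite{Brug2011}) already identifies $\mathrm{Comod}_{\mathcal V}(H\rtimes A)$ with the category of $(H\otimes-)$-coalgebras internal to $\mathrm{Comod}_{\mathcal V}(A)=\mathcal W$, that is, with $\mathrm{Comod}_{\mathcal W}(H)$. Under this identification an $(H\rtimes A)$-coaction $\beta\colon B\to (H\rtimes A)\otimes B$ is split, by post-composing with the counits $\epsilon_H$ and $\epsilon_A$, into its $A$-part $\lambda_{AB}\circ(\epsilon_H1_A)\circ\beta$ and its $H$-part $(1_H\lambda_B)\circ(1_H\epsilon_A)\circ\beta$; these are exactly the formulas defining $F^{-1}$, while $F$ reassembles them as $(1_H\alpha_B)\circ\chi_B$. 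So the bulk of the statement, namely that $F$ and $F^{-1}$ are well defined and mutually inverse, reduces to unwinding these definitions and applying counit axioms, and I would present it at that level rather than through a full diagram chase.

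A self-contained verification runs as follows. For $F$ to be well defined one checks that $\beta=(1_H\alpha_B)\circ\chi_B$ is counital and coassociative for the twisted comonoid $H\rtimes A$: counitality is immediate from $\epsilon_H\chi_B=1$ and $\epsilon_A\alpha_B=1$, whereas coassociativity uses the coassociativity of $\chi_B$ and of $\alpha_B$ together with the fact that $\chi_B$ is a morphism of $\mathcal W$, i.e.\ an $A$-comodule map, so that it slides past $\alpha_B$ in the way recorded by $\tau_H$. For $F^{-1}$ the only non-formal point is that the reconstructed $H$-coaction $\chi_B$ is again a morphism of $\mathcal W$; this compatibility of the $H$-coaction with the $A$-coaction is forced by the explicit form of the comultiplication of $H\rtimes A$, and I expect it to be the first genuine obstacle. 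The two composites $F^{-1}F$ and $FF^{-1}$ are then identities on objects, being already identities on the underlying morphisms, by one more application of the counit axioms, for $H$ and for $A$ respectively.

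It remains to treat the monoidal structure, the part not supplied by the distributive-law formalism. Because $F$ is an isomorphism of categories, it suffices to show that $F$ is strict monoidal; its inverse is then strict monoidal automatically. Strictness on the unit is clear, both unit objects being $I$ with trivial coactions. For the tensor I would write out the two coactions to be compared: on $\mathrm{Comod}_{\mathcal W}(H)$ the coaction on $B\otimes B'$ is built from $\chi_B,\chi_{B'}$, the multiplication $\mu_H$ taken in $\mathcal W$, and the braiding $\sigma$ of $\mathcal W$; on $\mathrm{Comod}_{\mathcal V}(H\rtimes A)$ the coaction on $F(B)\otimes F(B')$ is built from $\beta,\beta'$, the multiplication $\mu_{H\rtimes A}$, and the symmetry of $\mathcal V$. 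Applying $F$ to the former and expanding $\mu_{H\rtimes A}$ by its defining diagram, the desired equality becomes a single string-diagram identity.

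The main obstacle is exactly this last identity: it is where the braiding $\sigma$ of $\mathcal W$, whose definition (displayed just before $\tau_X$) feeds in the braiding coelement $\gamma$ and the instance $\sigma_{Y,A}$ of the symmetry of $\mathcal V$, must be matched against the $\gamma$-twisting and the coaction $\alpha_H$ built into $\mu_{H\rtimes A}$. I would verify it by the same bottom-to-top manipulation used in Proposition~\ref{prop:semi}: use that $\mu_H$ and the coactions are $A$-comodule morphisms to slide the $A$-strands, apply the braiding coelement axioms (\ref{ax:elt1})--(\ref{ax:elt3}) to absorb the two occurrences of $\gamma$, and reduce both sides to a common normal form. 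Once this identity is in hand, strict monoidality of $F$, and hence of $F^{-1}$, follows.
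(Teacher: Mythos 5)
Your proposal is correct, but it takes a genuinely different route from the paper. The paper's proof applies the dual of Beck's monadicity theorem directly to the composite forgetful functor $\mathcal{U}=\mathcal{U}_A\circ\mathcal{U}_H\colon\mathrm{Comod}_{\mathcal{W}}(H)\to\mathcal{V}$: since the split-(co)equalizer condition is not automatically inherited by composites, the bulk of the paper's argument is the hands-on construction, for a $\mathcal{U}$-split equalizer $h\colon E\to C$, of the coactions $\alpha_E=(1s)\circ\alpha_C\circ h$ and $\chi_E=(1s)\circ\chi_C\circ h$, the verification via the identities (\ref{eq:identA}) and (\ref{eq:identH}) that $\chi_E$ is an $A$-comodule morphism and that $h$ is an equalizer in $\mathrm{Comod}_{\mathcal{W}}(H)$; comonadicity of $\mathcal{U}$ then yields the comparison isomorphism. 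You instead invoke the purely formal Beck correspondence for distributive laws (coalgebras for the composite comonad assembled from $\tau_H$ are coalgebras for the lifted comonad in $\mathcal{W}$), supported by a direct check that the displayed formulas for $F$ and $F^{-1}$ are well defined and mutually inverse. That is legitimate and arguably more economical, since the proposition already supplies explicit inverse formulas and no split-equalizer analysis is required; what the paper's route buys is the stronger conclusion that $\mathcal{U}$ itself is comonadic, with the proposition falling out as the identification of its comparison functor. Note, however, that your route tacitly uses that the lifting of $H\otimes-$ through $\tau_H$ coincides with tensoring by the comonoid $H$ in $\mathcal{W}$, i.e.\ that $\tau_H 1\circ 1\alpha_X$ equals the monoidal coaction $\mu_A 11\circ 1\sigma 1\circ \alpha_H\alpha_X$ on $H\otimes X$ --- a one-line unwinding worth recording. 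Two smaller calibrations: for $FF^{-1}=\mathrm{id}$ on objects, counit axioms alone do not suffice --- reconstructing $\beta=(1_H\alpha_B)\circ\chi_B$ from its two projections uses the coassociativity of $\beta$ together with the counit compatibilities of $\tau_H$ (your appeal to the distributive-law theorem covers this, but ``one more application of the counit axioms'' undersells it); and the final obstacle you anticipate in the monoidality check is milder than you expect --- each of the two coactions being compared contains a single occurrence of $\gamma$ in matching position, and the paper settles the identity using only the compatibility of $\alpha_B$ with the comultiplication of $A$, with no use of the coelement axioms (\ref{ax:elt1})--(\ref{ax:elt3}), which are needed instead in Proposition~\ref{prop:semi} to make $H\rtimes A$ a bimonoid in the first place.
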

\begin{proof}
Using the dual of Beck's monadicity theorem, we show that the forgetful functor
\begin{align}
\mathrm{Comod}_{\mathcal{W}}(H)&\xrightarrow{\mathcal{U}} \mathcal{V}\\
((B,\alpha_B),\chi_B)&\mapsto B \nonumber \\
(f:C\rightarrow B)&\mapsto (f:C\rightarrow B) \nonumber
\end{align}
is comonadic. Since $\mathcal{U}$ is the composite of the two comonadic functors $\mathcal{U}_H$ and $\mathcal{U}_A$, it has a right adjoint and reflects isomorphisms. The third criterion, not necessarily preserved by composition, is the existence and preservation of $\mathcal{U}$-split equalizers. So, assume the parallel pair 
\begin{align}
f,g:((C,\alpha_C),\chi_C)\rightarrow((B,\alpha_B),\chi_B)
\end{align}
in $\mathrm{Comod}_{\mathcal{W}}(H)$ has a split equalizer 
$h:E\rightarrow C$ in $\mathcal{V}$. That is, there are maps
\begin{align}
B\xrightarrow{t}C \xrightarrow{s} E
\end{align}
satisfying
\begin{align}
s\circ h &= 1_E \label{eq:hs} \\
t\circ f &= 1_C \label{eq:ft}\\
h\circ s &= t\circ g\,. \label{eq:sh}
\end{align}
Comonadicity of $\mathcal{U}_A$ implies that $E$ is an $A$-comodule, with coaction
\begin{align}
\alpha_E=(
E
\xrightarrow{h}
C
\xrightarrow{\alpha_C}
AC
\xrightarrow{1s}
AE
)
\end{align}
and that $h$ is an equalizer in $\mathcal{W}$, but not necessarily split.
The proof involves the following identities (expressing the fact that $h$ is an $A$-comodule morphism):
\begin{align}
1h \circ 1s \circ \alpha_C  \circ  h 
&= 
1t \circ 1g \circ \alpha_C  \circ  h\\
&= 1t \circ 1f \circ \alpha_C  \circ  h\\
&= \alpha_C  \circ  h \label{eq:identA}
\end{align}
where the first equality follows from (\ref{eq:sh}), the second from $f\circ h=g\circ h$ and the fact that $f$ and $g$ are $A$-comodule morphisms, and the third comes from (\ref{eq:ft}). Exactly the same equalities hold with $A$ replaced by $H$, for the same reasons:
\begin{align}
1h \circ 1s \circ \chi_C  \circ  h 
&= 
1t \circ 1g \circ \chi_C  \circ  h\\
&= 1t \circ 1f \circ \chi_C  \circ  h\\
&= \chi_C  \circ  h \,.\label{eq:identH}
\end{align}
Now, the map
\begin{align}
\chi_E := (
E
\xrightarrow{h}
C
\xrightarrow{\chi_C}
HC
\xrightarrow{1s}
HE
)
\end{align}
is an $A$-comodule morphism
\begin{align}
\alpha_{HE}\circ\chi_E 
&\stackrel{\text{def.}}{=} 
\mu_A11\circ 1\sigma 1 \circ \alpha_H\alpha_E \circ 1s \circ \chi_C  \circ  h \nonumber
\\
&\stackrel{\text{(\ref{eq:hs})}}{=} 
\mu_A11\circ 1\sigma 1 \circ 111s \circ 111h \circ \alpha_H\alpha_E \circ 1s \circ \chi_C  \circ  h \nonumber
\\
&\stackrel{h \in \mathcal{W}}{=} 
\mu_A1s\circ 1\sigma 1  \circ \alpha_H\alpha_C \circ 1h \circ 1s \circ \chi_C  \circ  h
\nonumber \\
&\stackrel{\text{(\ref{eq:identH})}}{=}
\mu_A1s\circ 1\sigma 1  \circ \alpha_H\alpha_C \circ \chi_C  \circ  h \nonumber
\\
&\stackrel{}{=}
11s\circ \alpha_{HC} \circ \chi_C  \circ  h
\nonumber \\
&\stackrel{\chi_C\in \mathcal{W}}{=}
11s\circ 1 \chi_C \circ \alpha_C  \circ  h \nonumber \\
&\stackrel{\text{(\ref{eq:identA})}}{=}
11s\circ 1\chi_C\circ 1h\circ 1s\circ \alpha_C \circ h
\nonumber \\
&\stackrel{\text{def.}}{=}
 1\chi_E\circ\alpha_E  \nonumber
\end{align}
compatible with counit and comultiplication on $H$, which follows from the compatibility of $\chi_C$ with counit and comultiplication and equations (\ref{eq:hs}) and (\ref{eq:identH}). Therefore $((E,\alpha_E),\chi_E)$ is an object of $\mathrm{Comod}_{\mathcal{V}}(H)$.

The arrow $h$ is an $H$-comodule morphism, which follows directly from (\ref{eq:identH}). 
To show that it equalizes $f$ and $g$, take $((X,\alpha_X),\chi_X)$ to be an $H$-comodule and $m:X\rightarrow C$ an $H$-comodule morphism satisfying $f\circ m = g\circ m$. 
In $\mathcal{V}$, $s\circ m:X\rightarrow E$ is the unique comparison map, since $h$ is the equalizer of $f$ and $g$. But $s\circ m$ is also an $H$-comodule morphism
\begin{align}
 \chi_E\circ s\circ m 
&\stackrel{\text{def.}}{=} 
1s\circ \chi_C \circ h \circ  s\circ m \nonumber \\
&\stackrel{\text{(\ref{eq:sh})}}{=}
1s\circ \chi_C\circ t \circ   g\circ m \nonumber \\
&\stackrel{}{=}
1s\circ \chi_C\circ t \circ   f\circ m \nonumber \\
&\stackrel{\text{(\ref{eq:ft})}}{=}
1s\circ \chi_C \circ m \nonumber \\
&\stackrel{m\in \mathcal{V}^{H\otimes-}}{=}
1s \circ 1m \circ \chi_X \nonumber
\end{align}
completing the proof that $\mathcal{U}$ is comonadic.

The comparison functor $F$ is strict monoidal: 
the coaction for
$$F(((B,\alpha_B),\chi_B)\otimes ((C,\alpha_C),\chi_C))$$ 
is depicted as follows
\begin{equation}
\begin{tikzpicture}[baseline=(current  bounding  box.center)]
\def \strx {0.5}
\def \stry {0.2}
\def \angmnd {0}
\def \angio {0}

\node (lblH) at (-1 * \strx, 7 * \stry) {$H$};
\node (lblA) at (-0 * \strx, 7 * \stry) {$A$};
\node (lblB) at (1 * \strx, 7 * \stry) {$B$};
\node (lblC) at (2 * \strx, 7 * \stry) {$C$};

\coordinate (inH) at (-1 * \strx, 6 * \stry);
\coordinate (inA) at (0 * \strx, 6 * \stry);
\coordinate (inB) at (1 * \strx, 6 * \stry);
\coordinate (inC) at (2 * \strx, 6 * \stry);

\coordinate (cmndA) at (0 * \strx, 4.5 * \stry);
\coordinate (actAB1) at (0 * \strx, 3 * \stry);
\coordinate (actAC) at (1 * \strx, 3 * \stry);

\coordinate (actAB2) at (0 * \strx, -2 * \stry);

\coordinate (actHB) at (-0.5 * \strx, -4 * \stry);
\coordinate (actHC) at (0.5 * \strx, -4 * \stry);

\coordinate (cmndH) at (-1 * \strx, 1 * \stry);
\coordinate (elt) at (-1 * \strx, 0 * \stry);
\coordinate (act2) at (-0.5 * \strx, -2 * \stry);
%\coordinate (mndA2) at (0.5 * \strx, -2 * \stry);

\coordinate (outB) at (-1 * \strx, -6 * \stry);
\coordinate (outC) at (0 * \strx, -6 * \stry);

\draw[thick] (inH) to[out=-90+\angio,in=90] (cmndH);
\draw[very thin] (inA) to[out=-90+\angio,in=90] (cmndA);
\draw (inB) to[out=-90-\angio,in=\angmnd] (actAB1)
			to[out=-90,in=\angmnd] (actAB2)
			to[out=-90,in=\angmnd] (actHB)
			to[out=-90,in=90-\angio] (outB);
\draw (inC) to[out=-90-\angio,in=\angmnd] (actAC)
			to[out=-90,in=\angmnd] (actHC)
			to[out=-90,in=90-\angio] (outC);			
% % % % % % % % % % % % % % % % % % % %
\draw[thick] (cmndH) to[out=180+\angmnd,in=180-\angmnd] (actHB);
\draw[thick] (cmndH) to[out=-\angmnd,in=\angmnd] (act2);
\draw[very thin] (cmndA) to[out=180+\angmnd,in=180-\angmnd] (actAB1);
\draw[very thin] (cmndA) to[out=-\angmnd,in=180-\angmnd] (actAC);
% % % % % % % % % % % % % % % % % % % % %
\draw[very thin] (elt) to[out=180,in=180-\angmnd] (act2);
\draw[very thin] (elt) to[out=0,in=180-\angmnd] (actAB2);
% % % % % % % % % % % % % % % % % % % % %
\draw[thick] (act2) to[out=-90,in=180-\angmnd] (actHC);
%\draw[very thin] (mndA2) to[out=-90,in=90-\angio] (outA1);
\end{tikzpicture}
\end{equation}
while the coaction for
$$F((B,\alpha_B),\chi_B)\otimes F((C,\alpha_C),\chi_C)$$ is depicted by
\begin{equation}
\begin{tikzpicture}[baseline=(current  bounding  box.center)]
\def \strx {0.5}
\def \stry {0.2}
\def \angmnd {0}
\def \angio {0}

\node (lblH) at (-1 * \strx, 7 * \stry) {$H$};
\node (lblA) at (-0 * \strx, 7 * \stry) {$A$};
\node (lblB) at (1 * \strx, 7 * \stry) {$B$};
\node (lblC) at (2 * \strx, 7 * \stry) {$C$};

\coordinate (inH) at (-1 * \strx, 6 * \stry);
\coordinate (inA) at (0 * \strx, 6 * \stry);
\coordinate (inB) at (1 * \strx, 6 * \stry);
\coordinate (inC) at (2 * \strx, 6 * \stry);

\coordinate (cmndA) at (0 * \strx, 5 * \stry);
\coordinate (cmndH) at (-1 * \strx, 5 * \stry);
\coordinate (elt) at (-1 * \strx, 4 * \stry);
\coordinate (act2) at (-1 * \strx, 2 * \stry);
\coordinate (mndA2) at (-0.5 * \strx, 2 * \stry);

\coordinate (cross)  at (-0.75 * \strx, 1 * \stry);

\coordinate (actAB) at (-0.5 * \strx, -3 * \stry);
\coordinate (actAC) at (1 * \strx, -3 * \stry);

\coordinate (actHB) at (-1 * \strx, -4 * \stry);
\coordinate (actHC) at (0.5 * \strx, -4 * \stry);

\coordinate (outB) at (-1 * \strx, -6 * \stry);
\coordinate (outC) at (0.5 * \strx, -6 * \stry);

\draw[thick] (inH) to[out=-90+\angio,in=90] (cmndH);
\draw[very thin] (inA) to[out=-90+\angio,in=90] (cmndA);
\draw (inB) to[out=-90-\angio,in=\angmnd] (actAB)
			to[out=-90,in=\angmnd] (actHB)
			to[out=-90,in=90-\angio] (outB);
\draw (inC) to[out=-90-\angio,in=\angmnd] (actAC)
			to[out=-90,in=\angmnd] (actHC)
			to[out=-90,in=90-\angio] (outC);			
% % % % % % % % % % % % % % % % % % % %
\draw[thick] (cmndH) to[out=180+\angmnd,in=180-\angmnd] (actHB);
\draw[thick] (cmndH) to[out=-\angmnd,in=\angmnd] (act2);
\draw[very thin] (cmndA) to[out=180+\angmnd,in=\angmnd] (mndA2);
\draw[very thin] (cmndA) to[out=-\angmnd,in=180-\angmnd] (actAC);
% % % % % % % % % % % % % % % % % % % % %
\draw[very thin] (elt) to[out=180,in=180-\angmnd] (act2);
\draw[very thin] (elt) to[out=0,in=180-\angmnd] (mndA2);
% % % % % % % % % % % % % % % % % % % % %
\draw[thick] (act2) to[out=-90,in=135] (cross)
					to[out=-45,in=180-\angmnd] (actHC);
\draw[very thin] (mndA2) to[out=-90,in=45] (cross)
						 to[out=225,in=180-\angmnd] (actAB);
\end{tikzpicture}
\end{equation}
These are equal since $\alpha_B$ is compatible with comultiplication on $A$.
\end{proof}

\begin{example}
When $\mathcal{V}=(\text{Set}^\text{op},\times)$, the multiplication of $A$ is forced to be the diagonal map, the comultiplication gives $A$ a monoid structure with identity denoted by $e_A$, 
and the only possible cobraiding element is $(e_A,e_A)$. 
An $A$-comodule bimonoid $H$ is the same as a monoid morphism $\phi:A\rightarrow \text{End}(H)$, and $H\rtimes A$ is precisely the semidirect product for monoids generalising the one for groups.
\end{example}

\section{Birings}\label{sec:birings}

In this section we consider two particular types of bimonoids in a braided monoidal additive category. The additivity condition is about existence of direct sums which, as absolute colimits, are preserved by all $\text{Ab}$-enriched functors, in particular tensoring. The naturality of the braiding implies it is compatible with direct sums: 
for $H=A\oplus B$ and $H'=A'\oplus B'$, we have
\begin{equation}
    \sigma_{HH'}=
    \left[\begin{array}{cccc}
    \sigma_{AA'} & 0 & 0 & 0	\\
    0 & 0 & \sigma_{BA'} & 0	\\
    0 & \sigma_{AB'} & 0 & 0	\\
    0 & 0 & 0 & \sigma_{BB'}
    \end{array}\right]
   \end{equation}
which can be concisely written by specifying non-zero components
\begin{equation}
\begin{tikzpicture}[x={(0,-1cm)},y={(-1cm,0)},every node/.style={scale=1},baseline=(current  bounding  box.center)]
\def \strx {0.2};
\def \stry {0.5};
%objects lvl1
\node (DD1) at (-3*\strx,3*\stry) {$AA'$};
\node (DI1) at (-1*\strx,3*\stry) {$AB'$};
\node (ID1) at (1*\strx,3*\stry) {$BA'$};
\node (II1) at (3*\strx,3*\stry) {$BB'$};
%objects lvl3
\node (DD2) at (-3*\strx,-3*\stry) {$A'A$};
\node (DI2) at (-1*\strx,-3*\stry) {$A'B$};
\node (ID2) at (1*\strx,-3*\stry) {$B'A$};
\node (II2) at (3*\strx,-3*\stry) {$B'B$};
%objects lvl2
%arrowsMu
\path[->,font=\tiny,>=angle 90]
(DD1) edge node[above] {$\sigma_{AA'}$} (DD2)
(DI1) edge node[above left] {$\sigma_{AB'}$} (ID2)
(ID1) edge node[below left] {$\sigma_{BA'}$} (DI2)
(II1) edge node[below] {$\sigma_{BB'}$} (II2);
\end{tikzpicture}
\end{equation}
where concatenation is the tensor product and vertical empty space is the direct sum. From here we directly get the following lemma.

\begin{lemma}\label{lemma:0tensor}
If an object $H$ has as a symmetry morphism $$\sigma_{HH}=\pm 1_{HH}\, ,$$ then any decompositions of $H$ into a sum
$$H=\sum_i H_i$$
forces
\begin{align}
\sigma_{H_iH_j}&=0,\text{ for }i\neq j\\
\sigma_{H_iH_i}&=\pm 1_{H_iH_i}\,.
\end{align}
since the components of $\sigma$ are isomorphisms this means that for $i\neq j$
\begin{equation}
H_i\otimes H_j=0\,. \label{eq:CompTen0}
\end{equation}
\end{lemma}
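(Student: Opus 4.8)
The plan is to read off the two displayed equalities directly from the block description of the braiding recorded immediately before the lemma, specialised to the case where both tensor factors carry the same decomposition $H=\sum_i H_i$. The underlying input is naturality of $\sigma$ with respect to the coprojections $\iota_i\colon H_i\to H$ of the biproduct: for each pair of indices one has
\[
\sigma_{HH}\circ(\iota_i\otimes\iota_j)=(\iota_j\otimes\iota_i)\circ\sigma_{H_iH_j},
\]
so the only component of $\sigma_{HH}$ issuing from the summand $H_i\otimes H_j$ lands in $H_j\otimes H_i$ and equals $\sigma_{H_iH_j}$. This is exactly the ``crossing'' pattern of the matrix displayed above, now written for an arbitrary index set rather than for a two-term splitting.

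First I would substitute the hypothesis $\sigma_{HH}=\pm 1_{HH}$ into the left-hand side, turning the identity above into $(\iota_j\otimes\iota_i)\circ\sigma_{H_iH_j}=\pm(\iota_i\otimes\iota_j)$. Post-composing with the projections $\pi_k\otimes\pi_l$ and using $\pi_k\iota_j=\delta_{jk}1_{H_j}$ extracts the $(k,l)$-component on each side; taking $k=j$, $l=i$ isolates $\sigma_{H_iH_j}$ on the left, while the right-hand side contributes only when $i=j$. This at once gives $\sigma_{H_iH_j}=0$ for $i\neq j$ and $\sigma_{H_iH_i}=\pm 1_{H_iH_i}$. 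Equivalently, and more briefly, one observes that the off-diagonal blocks of the matrix representing $\pm 1_{HH}$ vanish while its diagonal blocks are $\pm 1$, and matches these against the crossing entries $\sigma_{H_iH_j}$.

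For the final assertion I would invoke invertibility of the braiding: in any braided monoidal category each $\sigma_{XY}$ is a natural isomorphism, so in particular $\sigma_{H_iH_j}$ is invertible. A morphism that is simultaneously zero and an isomorphism forces its source to be a zero object, since its inverse $g$ satisfies $1_{H_i\otimes H_j}=g\circ 0=0$; hence $H_i\otimes H_j=0$ whenever $i\neq j$.

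I anticipate no serious obstacle, since the content is purely a bookkeeping consequence of naturality together with the biproduct relations $\pi_k\iota_j=\delta_{jk}1_{H_j}$ and $\sum_i\iota_i\pi_i=1_H$. The one point that deserves a word of care is the passage from the two-term matrix stated above to a general, possibly infinite, decomposition $H=\sum_i H_i$: there one argues componentwise through the coprojections and projections rather than manipulating a literal finite matrix, but the computation is otherwise unchanged.
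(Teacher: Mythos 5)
Your proof is correct and follows essentially the same route as the paper: the paper obtains the lemma directly from the block description of $\sigma_{HH'}$ for direct sums (which is exactly your naturality computation $\sigma_{HH}\circ(\iota_i\otimes\iota_j)=(\iota_j\otimes\iota_i)\circ\sigma_{H_iH_j}$ made explicit via coprojections and projections), and your final step --- a morphism that is both zero and invertible forces $1_{H_i\otimes H_j}=0$, hence $H_i\otimes H_j=0$ --- is precisely the paper's appeal to the components of $\sigma$ being isomorphisms.
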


\subsection{The grading Hopf ring}

Let $\mathcal{W}$ be a category which is, in addition, symmetric and has countable coproducts preserved by tensoring. Denote by $\mathbb{Z}\cdot C$ the copower of the object $C\in \mathcal{W}$ by the set of integers $\mathbb{Z}$. In particular,  there is an object
\begin{equation}
Z:=\mathbb{Z}\cdot I\,.
\end{equation}
The addition of integers gives $\mathbb{Z}$ a group (Hopf monoid) structure in $(\mathrm{Set},\times)$, and induces a Hopf ring structure on $Z$, given by
\begin{equation}
\begin{tikzpicture}[scale=2.5, every node/.style={scale=1},baseline=(current  bounding  box.center)]
% % % left hand side
%objects
\node (A1) at (-1,0) {$I\cong \{*\}\cdot I$};
\node (A2) at (0,0) {$\mathbb{Z}\cdot I$};
\node (A3) at (1.5,0) {$(\mathbb{Z}\times \mathbb{Z})\cdot I\cong Z\otimes Z\,.$};
%arrows
\path[transform canvas={yshift=1mm},<-,font=\scriptsize,>=angle 90]
(A1) edge node[above] {$!\cdot I$} (A2);
\path[transform canvas={yshift=-1mm},->,font=\scriptsize,>=angle 90]
(A1)edge node[below] {$0\cdot I$} (A2);
\path[transform canvas={yshift=1mm},->,font=\scriptsize,>=angle 90]
(A2) edge node[above] {$\Delta\cdot I$} (A3);
\path[transform canvas={yshift=-1mm},<-,font=\scriptsize,>=angle 90]
(A2)edge node[below] {$+\cdot I$} (A3);
\end{tikzpicture}
\end{equation}
Tensoring with $Z$ gives a functor isomorphic to taking a copower by $\mathbb{Z}$
\begin{align*}
Z\otimes C& = (\mathbb{Z}\cdot I)\otimes C\\
	&\cong \mathbb{Z}\cdot (I\otimes C)\\
	&\cong  \mathbb{Z}\cdot C\,.
\end{align*}

Since $\mathcal{W}$ is symmetric monoidal, the category 
$\mathrm{Comod}_{\mathcal{W}}(Z)$ of $Z$-comodules is monoidal,
and becomes braided on using the braiding coelement $\gamma$ depicted by
\begin{equation}\label{diag:coel}
\begin{tikzpicture}[xscale=0.6, every node/.style={scale=1},baseline=(current  bounding  box.center)]
% % % left hand side
%objects
\node (A1) at (-3.5,0.7) {$(\mathbb{Z}\times \mathbb{Z})\cdot I$};
\node (A2) at (0,0.7) {$I$};
\node (A3) at (-3.5,-0.7) {$I$};
%arrows
\path[<-,font=\scriptsize,>=angle 90]
(A1) edge node[transform canvas={yshift=-1mm},above] {$c_{ij}$} (A2);
\path[transform canvas={yshift=0mm},->,font=\scriptsize,>=angle 90]
(A2) edge node[below right] {$(-1)^{ij}$} (A3);
\path[transform canvas={yshift=0mm},->,font=\scriptsize,>=angle 90]
(A1)edge node[left] {$\gamma$} (A3);
\end{tikzpicture}
\end{equation}
Arrows $c_{ij}$ denote coproduct coprojections, and $\gamma$ satisfies the coelement axioms (\ref{ax:elt1})-(\ref{ax:elt3}):
\begin{equation}
\begin{tikzpicture}[yscale=1,baseline=(current  bounding  box.center)]
\def \strx {0.5}
\def \stry {0.2}
\def \angmnd {0}
\node (lhs) at (-1,0) {\begin{tikzpicture}[every node/.style={scale=0.6}]
\coordinate (inA) at (-1 * \strx, 5 * \stry);
\coordinate (elt) at (1 * \strx, 3 * \stry);
\coordinate (cmndA) at (-1 * \strx, 3 * \stry);
\coordinate (mndA1) at (-0.5 * \strx, -1 * \stry);
\coordinate (mndA2) at (-1 * \strx, -3 * \stry);
\coordinate (outA1) at (-1 * \strx, -5 * \stry);
\coordinate (outA2) at (1 * \strx, -5 * \stry);
\node at (outA1) [left] {$i$};
\node at (outA2) [left] {$j$};
\node at (inA) [right] {$(i+j)(-1)^{ij}$};
\draw[very thin] (inA) to[out=-90,in=90] (cmndA);
\draw[very thin] (elt) to[out=180,in=\angmnd] (mndA1);
\draw[very thin] (elt) to[out=0,in=\angmnd] (mndA2);
\draw[very thin] (cmndA) to[out=180+\angmnd,in=180-\angmnd] (mndA2);
\draw[very thin] (cmndA) to[out=-\angmnd,in=180-\angmnd] (mndA1);
\draw[very thin] (mndA1) to[out=-90,in=90] (outA2);
\draw[very thin] (mndA2) to[out=-90,in=90] (outA1);
\end{tikzpicture}};
\node (eq) at (0,0) {=};
\node (rhs) at (1,0) {\begin{tikzpicture}[every node/.style={scale=0.6}]
\coordinate (inA) at (1 * \strx, 5 * \stry);
\coordinate (elt) at (-1 * \strx, 3 * \stry);
\coordinate (cmndA) at (1 * \strx, 3 * \stry);
\coordinate (mndA1) at (-1 * \strx, -1 * \stry);
\coordinate (mndA2) at (1 * \strx, -1 * \stry);
\coordinate (outA1) at (-1 * \strx, -5 * \stry);
\coordinate (outA2) at (1 * \strx, -5 * \stry);
\node at (outA1) [right] {$i$};
\node at (outA2) [left] {$j$};
\node at (inA) [left] {$(-1)^{ji}(j+i)$};
\draw[very thin] (inA) to[out=-90,in=90] (cmndA);
\draw[very thin] (elt) to[out=180,in=180-\angmnd] (mndA1);
\draw[very thin] (elt) to[out=0,in=180-\angmnd] (mndA2);
\draw[very thin] (cmndA) to[out=180+\angmnd,in=\angmnd] (mndA1);
\draw[very thin] (cmndA) to[out=-\angmnd,in=\angmnd] (mndA2);
\draw[very thin] (mndA1) to[out=-90,in=90] (outA2);
\draw[very thin] (mndA2) to[out=-90,in=90] (outA1);
\end{tikzpicture}};
\end{tikzpicture}
\end{equation}
\begin{equation}
\begin{tikzpicture}[baseline=(current  bounding  box.center)]
\def \strx {0.5}
\def \stry {0.8}
\def \angmnd {0}
\node (lhs) at (-1,0) {\begin{tikzpicture}[every node/.style={scale=0.6}]
\coordinate (elt) at (0 * \strx, 1 * \stry);
\coordinate (cmndA) at (1 * \strx, 0.5 * \stry);
\coordinate (outA1) at (-1 * \strx, -1 * \stry);
\coordinate (outA2) at (0.5 * \strx, -1 * \stry);
\coordinate (outA3) at (1 * \strx, -1 * \stry);
\node at (outA1) [left] {$i$};
\node at (outA2) [left] {$j$};
\node at (outA3) [right] {$k$};
\node at (elt) [above] {$(-1)^{i(j+k)}$};
\draw[very thin] (elt) to[out=0,in=90] (cmndA);
\draw[very thin] (elt) to[out=180,in=90] (outA1);
\draw[very thin] (cmndA) to[out=180,in=90] (outA2);
\draw[very thin] (cmndA) to[out=0,in=90] (outA3);
\end{tikzpicture}};
\node (eq) at (0,0) {=};
\node (rhs) at (1,0) {\begin{tikzpicture}[every node/.style={scale=0.6}]
\coordinate (elt1) at (0 * \strx, 1 * \stry);
\coordinate (elt2) at (0 * \strx, 0.5 * \stry);
\coordinate (mndA) at (-1 * \strx, -0.5 * \stry);
\coordinate (outA1) at (-1 * \strx, -1 * \stry);
\coordinate (outA2) at (0.5 * \strx, -1 * \stry);
\coordinate (outA3) at (1 * \strx, -1 * \stry);
\node at (outA1) [left] {$i$};
\node at (outA2) [left] {$j$};
\node at (outA3) [right] {$k$};
\node at (elt) [above] {$(-1)^{ij}(-1)^{ik}$};
\draw[very thin] (elt1) to[out=180,in=180-\angmnd] (mndA);
\draw[very thin] (elt1) to[out=0,in=90] (outA3);
\draw[very thin] (elt2) to[out=180,in=\angmnd] (mndA);
\draw[very thin] (elt2) to[out=0,in=90] (outA2);
\draw[very thin] (mndA) to[out=-90,in=90] (outA1);
\end{tikzpicture}};
\end{tikzpicture}
\end{equation}
\begin{equation}
\begin{tikzpicture}[baseline=(current  bounding  box.center)]
\def \strx {0.5}
\def \stry {0.8}
\def \angmnd {0}
\node (lhs) at (-1,0) {\begin{tikzpicture}[every node/.style={scale=0.6}]
\coordinate (elt1) at (-1 * \strx, 1 * \stry);
\coordinate (elt2) at (1 * \strx, 1 * \stry);
\coordinate (mndA) at (0.5 * \strx, -0.5 * \stry);
\coordinate (outA1) at (-1.5 * \strx, -1 * \stry);
\coordinate (outA2) at (-0.5 * \strx, -1 * \stry);
\coordinate (outA3) at (0.5 * \strx, -1 * \stry);
\node at (outA1) [left] {$i$};
\node at (outA2) [left] {$j$};
\node at (outA3) [right] {$k$};
\node at (elt) [above] {$(-1)^{ik}(-1)^{jk}$};
\draw[very thin] (elt1) to[out=0,in=180-\angmnd] (mndA);
\draw[very thin] (elt1) to[out=180,in=90] (outA1);
\draw[very thin] (elt2) to[out=0,in=\angmnd] (mndA);
\draw[very thin] (elt2) to[out=180,in=90] (outA2);
\draw[very thin] (mndA) to[out=-90,in=90] (outA3);
\end{tikzpicture}};
\node (eq) at (0,0) {=};
\node (rhs) at (1,0) {\begin{tikzpicture}[every node/.style={scale=0.6}]
\coordinate (elt) at (0 * \strx, 1 * \stry);
\coordinate (cmndA) at (-1 * \strx, 0.5 * \stry);
\coordinate (outA1) at (-1 * \strx, -1 * \stry);
\coordinate (outA2) at (-0.5 * \strx, -1 * \stry);
\coordinate (outA3) at (1 * \strx, -1 * \stry);
\node at (outA1) [left] {$i$};
\node at (outA2) [right] {$j$};
\node at (outA3) [right] {$k$};
\node at (elt) [above] {$(-1)^{(i+j)k}$};
\draw[very thin] (elt) to[out=180,in=90] (cmndA);
\draw[very thin] (elt) to[out=0,in=90] (outA3);
\draw[very thin] (cmndA) to[out=180,in=90] (outA1);
\draw[very thin] (cmndA) to[out=0,in=90] (outA2);
\end{tikzpicture}};
\end{tikzpicture} 
\end{equation}

$\mathrm{Comod}_{\mathcal{W}}(Z)$ inherits direct sums: if $B\xrightarrow{b}\mathbb{Z}\cdot B$ and $C\xrightarrow{c}\mathbb{Z}\cdot C$ are $Z$-comodules, then
\begin{equation*}
\begin{tikzpicture}[x={(0,-1cm)},y={(-1cm,0)},every node/.style={scale=1},baseline=(current  bounding  box.center)]
\def \strx {0.25};
\def \stry {0.3};
%objects lvl1
\node (DD1) at (-1*\strx,3*\stry) {$B$};
\node (II1) at (1*\strx,3*\stry) {$C$};
%objects lvl3
\node (DD2) at (-1*\strx,-3*\stry) {$\mathbb{Z}\cdot B$};
\node (II2) at (1*\strx,-3*\stry) {$\mathbb{Z}\cdot C$};
%objects lvl2
%arrowsMu
\path[->,font=\tiny,>=angle 90]
(DD1) edge node[above] {$b$} (DD2)
(II1) edge node[above] {$c$} (II2);
\end{tikzpicture}
\end{equation*}
is a $Z$-comodule as well, and the braiding induced from the braiding coelement $\gamma$ is compatible with direct sums. 

\begin{example}\label{ex:GAb}
When $\mathcal{W}=\mathrm{Ab}$, the biring $Z=\mathbb{Z}[x,x^{-1}]$ is the Laurent polynomial ring with integer coefficients. The coring structure is given by $1\mapsfrom x \mapsto x\otimes x$. Then
\begin{align*}
\mathrm{GAb} \rightarrow & \mathrm{Comod}_{\mathcal{W}}(Z) \\
C \mapsto & \Sigma C_n \xrightarrow{\xi} Z\otimes  \Sigma C_n\\
& c\in C_n\mapsto x^n\otimes c 
\end{align*}
is an equivalence of categories. Consider a $Z$-comodule
\begin{align*}
B&\xrightarrow{\beta} Z\otimes B\\
b&\mapsto \Sigma_i x^i\otimes \beta^{(b)}_i 
\end{align*}
$\beta$ being a group homomorphism ensures that
\begin{equation*}
\beta^{(b)}_i +\beta^{(b')}_i=\beta^{(b+b')}_i\text{ and } 
\beta^{(0)}_i = 0
\end{equation*}
which enable us to define abelian subgroups
\begin{equation*}
B_i=\{\beta^{(b)}_i|b\in B\}
\end{equation*}
while the compatibility with counit and comultiplication gives
\begin{align*}
\Sigma_i \beta^{(b)}_i&=b \\
\beta^{(\beta^{(b)}_i)}_j&=\delta_{i,j}\beta^{(b)}_i
\end{align*}
which ensure that 
\begin{equation*}
B=\Sigma_i B_i \ .
\end{equation*}
Here $\delta_{i,j}$ denotes the Kronecker delta.

The braiding coelement (\ref{diag:coel}) corresponds to the group homomorphism
\begin{align*}
\mathbb{Z}[x,x^{-1}]\otimes \mathbb{Z}[x,x^{-1}] &\xrightarrow{\gamma} \mathbb{Z}\\
x^i\otimes x^j &\mapsto (-1)^{ij}
\end{align*}
and gives a braiding (symmetry, in fact) in GAb.
\end{example}

\subsection{The differential Hopf ring}

Let $\mathcal{W}$ be a braided monoidal additive category.

\begin{prop}\label{prop:HopffromD}
An object $D$ with braiding $\sigma_{DD}=-1_{DD}$ induces a 
Hopf ring $H=D\oplus I$, whose monoid structure
$HH\xrightarrow{\mu} H\xleftarrow{\eta}I$
has non-zero components
\begin{equation} \label{eq:monStr}
\begin{tikzpicture}[x={(0,-1cm)},y={(-1cm,0)},baseline=(current  bounding  box.center)]
\def \strx {0.2};
\def \stry {1};
%objects lvl1
\node (DD1) at (-3*\strx,3*\stry) {$DD$};
\node (DI1) at (-1*\strx,3*\stry) {$DI$};
\node (ID1) at (1*\strx,3*\stry) {$ID$};
\node (II1) at (3*\strx,3*\stry) {$II$};
%
%objects lvl2
\node (D) at (-1*\strx, 0*\stry) {$D$};
\node (I) at (1*\strx, 0*\stry) {$I$};
%
%objects lvl3
\node (I1) at (0*\strx, -3*\stry) {$I\,.$};
%\node (dot) at (0*\strx, -3.2*\stry) {$.$};
%
%arrowsMu
\path[->,font=\tiny,>=angle 90]
(DI1) edge node[transform canvas={yshift=-1mm},above] {$\rho:=\rho_D$} (D)
(ID1) edge node[transform canvas={yshift=0.7mm},below] {$\lambda:=\lambda_D$} (D)
(II1) edge node[below] {$i:=\rho_I=\lambda_I$} (I);
%arrowsEta
\path[->,font=\tiny,>=angle 90]
(I1) edge node[above] {$1$} (I);
\end{tikzpicture}
\end{equation}
the comonoid structure $(\Delta,\epsilon)$ has inverses of (\ref{eq:monStr}) as non-zero components, and the antipode is \[
   s =
  \left[ {\begin{array}{cc}
   -1 & 0 \\
   0 & 1 \\
  \end{array} } \right]\,.
\]
\end{prop}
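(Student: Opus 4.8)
The plan is to define every structure morphism by its matrix with respect to the biproduct decomposition $H = D \oplus I$ and then check each Hopf-monoid axiom component-wise, isolating the single place where the hypothesis $\sigma_{DD} = -1_{DD}$ does real work. Concretely, $\eta : I \to H$ is the coprojection of the summand $I$ and $\epsilon : H \to I$ the projection onto $I$, while $\mu$ and $\Delta$ are read off from (\ref{eq:monStr}): the only non-zero entries of $\mu$ are $\rho : DI \to D$, $\lambda : ID \to D$ and $i : II \to I$, and those of $\Delta$ are the inverses $\rho^{-1}, \lambda^{-1}, i^{-1}$. In elementwise language the summand $I$ is grouplike and $D$ is primitive, $\Delta|_D = \rho^{-1} + \lambda^{-1}$, with $D$ annihilating itself, $\mu|_{DD} = 0$. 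Throughout, the bookkeeping of associators and unit constraints is absorbed by Mac Lane coherence, so each component identity reduces to a statement about $\rho, \lambda, i$, the additive structure, and the braiding.

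First I would check that $(H, \mu, \eta)$ is a monoid. The unit axioms are immediate from the triangle equations, since $\mu(\eta \otimes 1)$ and $\mu(1 \otimes \eta)$ are built from $\lambda$ and $\rho$. For associativity I would decompose $HHH$ into its eight summands: on $III$ the two composites agree by coherence of $i$; on the three summands with a single $D$ factor they agree by unit coherence for $\rho$ and $\lambda$; and on every summand carrying at least two copies of $D$ both composites vanish, because $\mu|_{DD} = 0$ forces $\mu$ applied to any adjacent pair of $D$'s, in either bracketing, to return $0$. Coassociativity and the counit axioms for $(H, \Delta, \epsilon)$ follow by the formally dual argument, the construction being self-dual under transposition of matrices.

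The heart of the proof, and the step I expect to be the main obstacle, is the bimonoid compatibility
\[
\Delta \circ \mu = (\mu \otimes \mu) \circ (1 \otimes \sigma \otimes 1) \circ (\Delta \otimes \Delta) : HH \to HH.
\]
I would verify this entry by entry over the sixteen source/target pairs among $DD, DI, ID, II$. Compatibility of $\mu$ and $\Delta$ with the (co)unit disposes of every entry that involves the summand $I$ in either argument: each reduces to coherence, since the braiding occurring in $(1 \otimes \sigma \otimes 1)$ is then one of the trivial unit braidings $\sigma_{DI}, \sigma_{ID}, \sigma_{II}$. The only entry in which $\sigma_{DD}$ genuinely intervenes is the $DD \to DD$ component. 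There the left-hand side is $0$, as $\mu|_{DD} = 0$ and $\Delta|_D$ has no $DD$ part. On the right-hand side, expanding $\Delta \otimes \Delta$ on $DD$ leaves exactly two surviving routes into the output $DD$: one through $(DI)(ID)$, on which the middle braiding is trivial and which contributes $+1_{DD}$, and one through $(ID)(DI)$, whose middle factors are precisely $D \otimes D$, so that $1 \otimes \sigma_{DD} \otimes 1 = -1$ makes it contribute $-1_{DD}$. The two routes cancel, and this cancellation is the entire content of the hypothesis $\sigma_{DD} = -1_{DD}$: with any other sign the axiom would fail.

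Finally I would confirm the antipode. Writing $s$ for the morphism that is $-1_D$ on $D$ and $1_I$ on $I$, I would check the right-inverse law $\mu(s \otimes 1)\Delta = \eta\epsilon$ on the two summands. On $I$ both sides are $1_I$, since $I$ is grouplike and $s$ fixes it. On $D$ one has $(s \otimes 1)\Delta|_D = -\rho^{-1} + \lambda^{-1}$, whence $\mu(s \otimes 1)\Delta|_D = -1_D + 1_D = 0 = \eta\epsilon|_D$, using primitivity of $D$ and $\epsilon|_D = 0$. The left-inverse law $\mu(1 \otimes s)\Delta = \eta\epsilon$ is identical by symmetry, completing the verification that $H$ is a Hopf ring.
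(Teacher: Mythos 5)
Your proposal is correct and takes essentially the same route as the paper's own proof: both verify everything componentwise over the biproduct decomposition, settle (co)associativity, the (co)unit laws and all mixed entries of the bimonoid axiom by monoidal/braided coherence (using $\sigma_{DI}=\lambda^{-1}\circ\rho$, $\sigma_{ID}=\rho^{-1}\circ\lambda$, $\sigma_{II}=1$), and locate the whole force of the hypothesis $\sigma_{DD}=-1_{DD}$ in the cancellation of the two surviving routes $DD\to (DI)(ID)\to DD$ and $DD\to (ID)(DI)\to DD$ in the $DD$-component of $\Delta\circ\mu=(\mu\otimes\mu)(1\otimes\sigma\otimes 1)(\Delta\otimes\Delta)$. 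Your antipode check is the same componentwise computation ($-1+1=0$ on $D$, identity on $I$) that the paper records for the inverse laws.
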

\begin{proof}
The (co)associativity and (co)unit axioms follow from coherence for monoidal categories, after noting that a component is non-zero if and only if it contains either one $D$ in its source and target, or none.

The compatibility of unit with counit and comultiplication is obvious.

The bimonoid axiom 
\begin{eqnarray}\label{eq:bimon1}
\begin{aligned}
HH\xrightarrow{\Delta\Delta}HHHH\xrightarrow{1\sigma 1}HHHH\xrightarrow{\mu\mu}HH\\
=HH\xrightarrow{\mu}H\xrightarrow{\Delta}HH
\end{aligned}
\end{eqnarray}
imposes that
\begin{equation}
\begin{tikzpicture}[x={(0,-1cm)},y={(-1cm,0)},baseline=(current  bounding  box.center)]
\def \strx {0.35};
\def \stry {1.5};
%objects lvl1
\node (DD1) at (-3*\strx,3*\stry) {$DD$};
\node (DI1) at (-1*\strx,3*\stry) {$DI$};
\node (ID1) at (1*\strx,3*\stry) {$ID$};
\node (II1) at (3*\strx,3*\stry) {$II$};
%
%objects lvl2
%\node (DDDD1) at (-15*\strx,1*\stry) {$DDDD$};
%\node (DDDI1) at (-13*\strx,1*\stry) {$DDDI$};
%\node (DDID1) at (-11*\strx,1*\stry) {$DDID$};
%\node (DDII1) at (-9*\strx,1*\stry) {$DDII$};
%\node (DIDD1) at (-7*\strx,1*\stry) {$DIDD$};
%\node (DIDI1) at (-9*\strx,1*\stry) {$DIDI$};
\node (DIID1) at (-6*\strx,1*\stry) {$DIID$};
\node (DIII1) at (-2*\strx,1*\stry) {$DIII$};
%\node (IDDD1) at (1*\strx,1*\stry) {$IDDD$};
\node (IDDI1) at (-4*\strx,1*\stry) {$IDDI$};
%\node (IDID1) at (-11*\strx,1*\stry) {$IDID$};
\node (IDII1) at (0*\strx,1*\stry) {$IDII$};
%\node (IIDD1) at (9*\strx,1*\stry) {$IIDD$};
\node (IIDI1) at (2*\strx,1*\stry) {$IIDI$};
\node (IIID1) at (4*\strx,1*\stry) {$IIID$};
\node (IIII1) at (6*\strx,1*\stry) {$IIII$};
%
%objects lvl3
%\node (DDDD2) at (-15*\strx,-1*\stry) {$DDDD$};
%\node (DDDI2) at (-13*\strx,-1*\stry) {$DDDI$};
%\node (DDID2) at (-11*\strx,-1*\stry) {$DDID$};
%\node (DDII2) at (-9*\strx,-1*\stry) {$DDII$};
%\node (DIDD2) at (-7*\strx,-1*\stry) {$DIDD$};
%\node (DIDI2) at (-5*\strx,-1*\stry) {$DIDI$};
\node (DIID2) at (-6*\strx,-1*\stry) {$DIID$};
\node (DIII2) at (-2*\strx,-1*\stry) {$DIII$};
%\node (IDDD2) at (1*\strx,-1*\stry) {$IDDD$};
\node (IDDI2) at (-4*\strx,-1*\stry) {$IDDI$};
%\node (IDID2) at (-7*\strx,-1*\stry) {$IDID$};
\node (IDII2) at (0*\strx,-1*\stry) {$IDII$};
%\node (IIDD2) at (-11*\strx,-1*\stry) {$IIDD$};
\node (IIDI2) at (2*\strx,-1*\stry) {$IIDI$};
\node (IIID2) at (4*\strx,-1*\stry) {$IIID$};
\node (IIII2) at (6*\strx,-1*\stry) {$IIII$};
%objects lvl4
\node (DD2) at (-3*\strx,-3*\stry) {$DD$};
\node (DI2) at (-1*\strx,-3*\stry) {$DI$};
\node (ID2) at (1*\strx,-3*\stry) {$ID$};
\node (II2) at (3*\strx,-3*\stry) {$II$};
%
%objects lvl2
%arrowsDD
\path[->,font=\tiny,>=angle 90]
%(DD1) edge node[above right] {$\rho^{-1}\rho^{-1}$} (DIDI1)
(DD1) edge node[fill=white,rounded corners=2pt,inner sep=1pt] {$\rho^{-1}\lambda^{-1}$} (DIID1)
(DD1) edge node[fill=white,rounded corners=2pt,inner sep=1pt,below] {$\lambda^{-1}\rho^{-1}$} (IDDI1)
%(DD1) edge node[above right] {$\lambda^{-1}\lambda^{-1}$} (IDID1)
(DI1) edge node[fill=white,rounded corners=2pt,inner sep=1pt] {$\rho^{-1}i^{-1}$} (DIII1)
(DI1) edge node[fill=white,rounded corners=2pt,inner sep=1pt] {$\lambda^{-1}i^{-1}$} (IDII1)
(ID1) edge node[fill=white,rounded corners=2pt,inner sep=1pt] {$i^{-1}\rho^{-1}$} (IIDI1)
(ID1) edge node[fill=white,rounded corners=2pt,inner sep=1pt,below] {$i^{-1}\lambda^{-1}$} (IIID1)
(II1) edge node[fill=white,rounded corners=2pt,inner sep=1pt,below] {$i^{-1}i^{-1}$} (IIII1);
%arrowsSigma
\path[->,font=\tiny,>=angle 90]
%(DIDI1) edge node[above] {$1\sigma_{ID}1$} (DDII2)
(DIID1) edge node[above] {$1\sigma_{II}1$} (DIID2)
(IDDI1) edge node[above] {$1\sigma_{DD}1$} (IDDI2)
%(IDID1) edge node[above] {$1\sigma_{DI}1$} (IIDD2)
(DIII1) edge node[above] {$1\sigma_{II}1$} (DIII2)
(IDII1) edge node[transform canvas={xshift=-1mm},above] {$1\sigma_{DI}1$} (IIDI2)
(IIDI1) edge node[transform canvas={xshift=-1mm},below] {$1\sigma_{ID}1$} (IDII2)
(IIID1) edge node[above] {$1\sigma_{II}1$} (IIID2)
(IIII1) edge node[above] {$1\sigma_{II}1$} (IIII2);
%arrowsMuMu
\path[<-,font=\tiny,>=angle 90]
%(DD2) edge node[above left] {$\rho\rho$} (DIDI2)
(DD2) edge node[fill=white,rounded corners=2pt,inner sep=1pt, above left] {$\rho\lambda$} (DIID2)
(DD2) edge node[fill=white,rounded corners=2pt,inner sep=1pt, left] {$\lambda\rho$} (IDDI2)
%(DD2) edge node[above left] {$\lambda\lambda$} (IDID2)
(DI2) edge node[fill=white,rounded corners=2pt,inner sep=1pt, left] {$\rho i$} (DIII2)
(DI2) edge node[fill=white,rounded corners=2pt,inner sep=1pt, left] {$\lambda i$} (IDII2)
(ID2) edge node[fill=white,rounded corners=2pt,inner sep=1pt, left] {$i\rho$} (IIDI2)
(ID2) edge node[fill=white,rounded corners=2pt,inner sep=1pt, below left] {$i\lambda$} (IIID2)
(II2) edge node[fill=white,rounded corners=2pt,inner sep=1pt, below left] {$i\lambda$} (IIII2);
\end{tikzpicture}
\end{equation}
equals
\begin{equation}\label{eq:bimon4}
\begin{tikzpicture}[x={(0,-1cm)},y={(-1cm,0)},baseline=(current  bounding  box.center)]
\def \strx {0.2};
\def \stry {1};
%objects lvl1
\node (DD1) at (-3*\strx,3*\stry) {$DD$};
\node (DI1) at (-1*\strx,3*\stry) {$DI$};
\node (ID1) at (1*\strx,3*\stry) {$ID$};
\node (II1) at (3*\strx,3*\stry) {$II$};
%
%objects lvl2
\node (D) at (-1*\strx, 0*\stry) {$D$};
\node (I) at (1*\strx, 0*\stry) {$I$};
%
%objects lvl3
\node (DD2) at (-3*\strx,-3*\stry) {$DD$};
\node (DI2) at (-1*\strx,-3*\stry) {$DI$};
\node (ID2) at (1*\strx,-3*\stry) {$ID$};
\node (II2) at (3*\strx,-3*\stry) {$II$};
%objects lvl2
%arrowsMu
\path[->,font=\tiny,>=angle 90]
(DI1) edge node[fill=white,rounded corners=2pt,inner sep=1pt, above left] {$\rho$} (D)
(ID1) edge node[fill=white,rounded corners=2pt,inner sep=1pt, left] {$\lambda$} (D)
(II1) edge node[fill=white,rounded corners=2pt,inner sep=1pt, below left] {$i$} (I);
%arrowsD
\path[<-,font=\tiny,>=angle 90]
(DI2) edge node[fill=white,rounded corners=2pt,inner sep=1pt, above right] {$\rho^{-1}$} (D)
(ID2) edge node[fill=white,rounded corners=2pt,inner sep=1pt, right] {$\lambda^{-1}$} (D)
(II2) edge node[fill=white,rounded corners=2pt,inner sep=1pt, below right] {$i^{-1}$} (I);
\end{tikzpicture}
\end{equation}
which follows from $\sigma_{DD}=-1$, braiding coherences \cite{Joyal1993}:  $\sigma_{DI}=\lambda^{-1}\circ\rho$, $\sigma_{ID}=\rho^{-1}\circ\lambda$ and $\sigma_{II}=1$, and coherences for unit and associator.
%\begin{equation}
%(\rho\lambda)\circ(1\sigma_{II}1)\circ(\rho^{-1}\lambda^{-1})+
%(\lambda\rho)\circ(1\sigma_{DD}1)\circ(\lambda^{-1}\rho^{-1})=0_{DD}
%\end{equation}

Finally, the Hopf axioms hold, for example the left inverse part gives
\begin{equation}
\begin{tikzpicture}[x={(0,-1cm)},y={(-1cm,0)},baseline=(current  bounding  box.center)]
\def \strx {0.2};
\def \stry {1};

%objects lvl1
\node (D1) at (-1*\strx, 3*\stry) {$D$};
\node (I1) at (1*\strx, 3*\stry) {$I$};
%%objects lvl1%
%\node (DD1) at (-3*\strx,1*\stry) {$DD$};
\node (DI1) at (-2*\strx,1*\stry) {$DI$};
\node (ID1) at (0*\strx,1*\stry) {$ID$};
\node (II1) at (2*\strx,1*\stry) {$II$};

%objects lvl3
%\node (DD2) at (-3*\strx,-1*\stry) {$DD$};
\node (DI2) at (-2*\strx,-1*\stry) {$DI$};
\node (ID2) at (0*\strx,-1*\stry) {$ID$};
\node (II2) at (2*\strx,-1*\stry) {$II$};
%objects lvl4
\node (D2) at (-1*\strx, -3*\stry) {$D$};
\node (I2) at (1*\strx, -3*\stry) {$I$};
%arrowsMu
\path[<-,font=\tiny,>=angle 90]
(DI1) edge node[fill=white,rounded corners=2pt,inner sep=1pt, above] {$\rho^{-1}$} (D1)
(ID1) edge node[fill=white,rounded corners=2pt,inner sep=1pt] {$\lambda^{-1}$} (D1)
(II1) edge node[fill=white,rounded corners=2pt,inner sep=1pt, below] {$i^{-1}$} (I1);
%arrows
\path[->,font=\tiny,>=angle 90]
(DI1) edge node[above] {$-1$} (DI2)
(ID1) edge node[above] {$1$} (ID2)
(II1) edge node[above] {$1$} (II2);
%arrowsD
\path[->,font=\tiny,>=angle 90]
(DI2) edge node[fill=white,rounded corners=2pt,inner sep=1pt, above] {$\rho$} (D2)
(ID2) edge node[fill=white,rounded corners=2pt,inner sep=1pt] {$\lambda$} (D2)
(II2) edge node[fill=white,rounded corners=2pt,inner sep=1pt, below] {$i$} (I2);
\end{tikzpicture}
\end{equation}
equals
\begin{equation}
\begin{tikzpicture}[x={(0,-1cm)},y={(-1cm,0)},baseline=(current  bounding  box.center)]
\def \strx {0.2};
\def \stry {1};

%objects lvl1
\node (D1) at (-1*\strx, 3*\stry) {$D$};
\node (I1) at (1*\strx, 3*\stry) {$I$};
%%objects lvl1%
%\node (DD1) at (-3*\strx,1*\stry) {$DD$};
\node (I) at (0*\strx,0*\stry) {$I$};
%objects lvl4
\node (D2) at (-1*\strx, -3*\stry) {$D$};
\node (I2) at (1*\strx, -3*\stry) {$I$};
%arrowsMu
\path[->,font=\tiny,>=angle 90]
(I1) edge node[above] {$1$} (I)
(I) edge node[above] {$1$} (I2);
\end{tikzpicture}
\end{equation}
\end{proof}

%\begin{example}
%When $\mathcal{V}=\text{GAb}$ and $D=S\mathbb{Z}$, the conditions for Proposition \ref{prop:HopffromD} are satisfied and $H=T\mathbb{Z}.$
%\end{example}

%\subsubsection{Hopf coring algebras}
%
%Here we assume $\mathcal{V}$ is coclosed.
%Tensoring with a Hopf monoid gives a monad $G$ on $\mathcal{V}$ and induces monoidal closed structure on the category of EM-algebras $\mathcal{V}^T$.
%Explicitly, objects of $\mathcal{V}^T$ are arrows of the form
%\begin{equation}
%DA\oplus A\xrightarrow{[d\,\,1]}A,\text{ such that }
%d\circ 1_Dd=0_{DDA,A}
%\end{equation}
%An algebra morphism is an arrow $A\xrightarrow{f}B$ satisfying
%\begin{equation}\label{eq:morph}
%f\circ d_A=d_B \circ 1_Df
%\end{equation}
%Tensor product of $(A,d_A)$ and $(B,d_B)$ is $(AB,d_A1+(1d_B)\circ(\sigma_{DA}1))$. The unit for the tensor product is the algebra $(I,0).$
%Internal hom $\mathcal{V}^T((A,d_A),(B,d_B))=([A,B],d_{[A,B]})$, where $d_{[A,B]}$ is adjunct of the following map
%\begin{equation}
%D[A,B]A\xrightarrow{d_B\circ 1\text{Ev}-\text{Ev}\circ1d_A\circ \sigma_{D,[A,B]}1}B
%\end{equation}
%
%There is an obvious algebra structure on $D$, $(D,0)$, and tensoring with it gives the desuspension functor on $\mathcal{V}^T$
%\begin{equation}
%S^*(A,d)=(DA,-1_Dd)
%\end{equation}
%which, due to the braiding coherence and $\sigma_{DD}=-1$, satisfies
%\begin{equation}
%S^*(A\otimes B)=S^*A\otimes B
%\end{equation}

\subsubsection{When $\mathcal{W}$ is a category of comodules}

Let $\mathcal{V}$ be a symmetric monoidal additive category, and $A$ a biring there, with a braiding coelement $A\otimes A\xrightarrow{\gamma}I$. Take $\mathcal{W}=\mathrm{Comod}_{\mathcal{V}}(A)$. Now $D$ as an $A$-comodule is an object of $\mathcal{V}$, together with a coaction $d: D\rightarrow A\otimes D$ satisfying
\begin{equation}\label{eq:Dcondition}
(\gamma 11)\circ(1\sigma_{AD}1)\circ(dd)\circ\sigma_{DD}=-1_{DD}
\end{equation}
where on the left we have the braiding in $\mathcal{W}$ and $\sigma$ is the symmetry in $\mathcal{V}$. 
From Proposition \ref{prop:HopffromD}, we have that $H=D\oplus I$ with the coaction
\begin{equation}
\begin{tikzpicture}[x={(0,-1cm)},y={(-1cm,0)},every node/.style={scale=1},baseline=(current  bounding  box.center)]
\def \strx {0.25};
\def \stry {0.3};
%objects lvl1
\node (DD1) at (-1*\strx,3*\stry) {$D$};
\node (II1) at (1*\strx,3*\stry) {$I$};
%objects lvl3
\node (DD2) at (-1*\strx,-3*\stry) {$AD$};
\node (II2) at (1*\strx,-3*\stry) {$A$};
%objects lvl2
%arrowsMu
\path[->,font=\tiny,>=angle 90]
(DD1) edge node[above] {$d$} (DD2)
(II1) edge node[above] {$\eta$} (II2);
\end{tikzpicture}
\end{equation}
is a Hopf ring in $\mathcal{W}$. Hence, by Proposition \ref{prop:semi}, there is a semidirect product $H\rtimes A=DA\oplus A$, with the Hopf ring structure in $\mathcal{V}$ having components 
\begin{equation} \label{eq:monCross}
\begin{tikzpicture}[x={(0,-1cm)},y={(-1cm,0)},baseline=(current  bounding  box.center)]
\def \strx {0.2};
\def \stry {1};
%objects lvl1
\node (DD1) at (-3*\strx,3*\stry) {$DADA$};
\node (DI1) at (-1*\strx,3*\stry) {$DAIA$};
\node (ID1) at (1*\strx,3*\stry) {$IADA$};
\node (II1) at (3*\strx,3*\stry) {$IAIA$};
%
%objects lvl2
\node (D) at (-1*\strx, 0*\stry) {$DA$};
\node (I) at (1*\strx, 0*\stry) {$IA$};
%
%objects lvl3
\node (I1) at (0*\strx, -3*\stry) {$I\,\,$};
%
%arrowsMu
\path[->,font=\tiny,>=angle 90]
(DI1) edge node[fill=white,rounded corners=2pt,inner sep=1pt, above] {$1\mu$} (D)
(ID1) edge node[fill=white,rounded corners=2pt,inner sep=1pt, below] {$1\mu \circ s_{AD} 1$} (D)
(II1) edge node[below] {$\mu$} (I);
%arrowsEta
\path[->,font=\tiny,>=angle 90]
(I1) edge node[above] {$\eta$} (I);
\end{tikzpicture}
\end{equation}
\begin{equation} \label{eq:comonCross}
\begin{tikzpicture}[x={(0,-1cm)},y={(-1cm,0)},baseline=(current  bounding  box.center)]
\def \strx {0.2};
\def \stry {1};
%objects lvl1
\node (DD1) at (-3*\strx,3*\stry) {$DADA$};
\node (DI1) at (-1*\strx,3*\stry) {$DAIA$};
\node (ID1) at (1*\strx,3*\stry) {$IADA$};
\node (II1) at (3*\strx,3*\stry) {$IAIA$};
%
%objects lvl2
\node (D) at (-1*\strx, 0*\stry) {$DA$};
\node (I) at (1*\strx, 0*\stry) {$IA$};
%
%objects lvl3
\node (I1) at (0*\strx, -3*\stry) {$I\,.$};
%
%arrowsMu
\path[<-,font=\tiny,>=angle 90]
(DI1) edge node[fill=white,rounded corners=2pt,inner sep=1pt, above] {$1\delta$} (D)
(ID1) edge node[fill=white,rounded corners=2pt,inner sep=1pt, below] {$\tau_{D}1\circ 1\delta$} (D)
(II1) edge node[fill=white,rounded corners=2pt,inner sep=1pt, below] {$\delta$} (I);
%arrowsEta
\path[<-,font=\tiny,>=angle 90]
(I1) edge node[above] {$\epsilon$} (I);
\end{tikzpicture}
\end{equation}

\subsubsection{When $\mathcal{W}=\text{GAb}$}

Take $\mathcal{V}=\mathrm{Ab}$, and $A=Z(=\mathbb{Z}[x,x^{-1}])$. An $A$-comodule $(D,d)$ can be thought of as a graded abelian group (see Example \ref{ex:GAb}) with grades $D_i$. Condition (\ref{eq:Dcondition}) gives that for all $i,j\in\mathbb{Z}$ and $x\in D_i$ and $y\in D_j$
\begin{equation}\label{eq:ZDBraidingCondition}
(-1)^{ij}y\otimes x=-x\otimes y\,.
\end{equation}
The left hand side of the equality is an element of $D_j\otimes D_i$ component in the sum defining $(D\otimes D)_{i+j}$, while the right hand side is an element of $D_i\otimes D_j$. An argument similar to the one for Lemma \ref{lemma:0tensor} forces
\begin{align}
D_i\otimes D_j &=0,\text{ for }i\neq j\label{eq:ten0}\\
\sigma_{D_iD_i}&=(-1)^{i+1} 1_{D_iD_i}\,.
\end{align}
%since any non-zero element (in the tensor product) of the form
%\begin{equation}
%\sum_{l=1}^n x_l\otimes y_l
%\end{equation}
%would give that, for some $k$, elements $x_k$ and $y_k$ violate (\ref{eq:ZDBraidingCondition}).

Now, Lemma \ref{lemma:0tensor} further constrains the decomposition of individual $D_i$. We will consider those groups involved in  decomposition of either finitely generated groups, namely $\mathbb{Z}$ and $\mathbb{Z}/p_s^{n_s}\mathbb{Z}$, or divisible groups \cite{Kaplansky1954}, namely $\mathbb{Q}$ and
Pr\" ufer groups $\mathbb{Q}(p_s)$, where $p_s$ is the $s^\text{th}$ prime, and $n_s\in \mathbb{N}$. The tensor ``multiplication table'', up to isomorphism, for these groups ($\mathbb{Z}$ is omitted) is given by
\begin{equation}
\arraycolsep=1.1pt\def\arraystretch{1.2}
\begin{array}{|c|c|c|c|}
\hline
\otimes&
\mathbb{Z}/p_s^{n_s}\mathbb{Z}&
\mathbb{Q}(p_s)&
\mathbb{Q}\\
\hline
\mathbb{Z}/p_t^{n_t}\mathbb{Z}&
\delta_{s,t}\mathbb{Z}/p_s^{\min(n_s,n_t)}\mathbb{Z}&
0&
0\\
\hline
\mathbb{Q}(p_t)&
0&
0&
0\\
\hline
\mathbb{Q}&
0&
0&
\mathbb{Q}\\
\hline
\end{array} 
\end{equation}
Each of these components has $\sigma_{XX}=1_{XX}$ which can be shown using the following pattern
\begin{align}\label{eq:commArg}
u\otimes v = (me) \otimes (ne)
	= (ne) \otimes (me)
	= v \otimes u\,
\end{align}
and the fact that a pair $(u,v)$ of elements determines $e$ such that $u=me$ and $v=ne$ for some integers $m$ and $n$. In addition, when $X=\mathbb{Z}/2\mathbb{Z}\,,$ we have $\sigma_{XX}=-1\,.$ All this means that we can have:
\begin{itemize}
\item either only one copy of $\mathbb{Z}$ in one of the odd degrees
\item or no copies of $\mathbb{Z}$, at most one copy of $\mathbb{Z}/p_s^{n_s}\mathbb{Z}$ for each $s$ and fixed $n_s$ in odd degrees (except for $\mathbb{Z}/2\mathbb{Z}$ which can appear in an even degree) and arbitrary many copies of Pr\" ufer groups, at arbitrary degrees.
\end{itemize}

In the torsion-free-non-divisible part, we could ask for the following sufficient condition, slightly generalising\footnote{The first author is not aware of any examples of this that are not covered above.} the argument followed in (\ref{eq:commArg}).
\begin{lemma}
If each two elements $u$ and $v$ of an abelian group $G$ determine a set of elements $E={(e_i)}_{i\in I}$ such that $e_s\otimes e_t=0$ for $s\neq t$ and both $u$ and $v$ can be expressed as finite linear combination of elements from $E$, then $\sigma_{GG}=1_{GG}$.
\end{lemma}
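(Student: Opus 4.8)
The plan is to show that the symmetry $\sigma_{GG}\colon G\otimes G\to G\otimes G$, which in $\mathrm{Ab}$ is the flip sending a simple tensor $u\otimes v$ to $v\otimes u$, acts as the identity on every simple tensor. Since simple tensors generate $G\otimes G$ and $\sigma_{GG}$ is a group homomorphism, this will suffice to conclude $\sigma_{GG}=1_{GG}$. So the whole task reduces to establishing $u\otimes v=v\otimes u$ for each fixed pair $u,v\in G$.

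First I would fix an arbitrary pair $u,v\in G$ and invoke the hypothesis to obtain a family $E=(e_i)_{i\in I}$ with $e_s\otimes e_t=0$ for $s\neq t$, together with finite expressions $u=\sum_i m_i e_i$ and $v=\sum_i n_i e_i$ with integer coefficients $m_i,n_i$ (almost all zero). Note that $E$ is permitted to depend on the chosen pair; this is harmless, since the target identity $u\otimes v=v\otimes u$ is verified one pair at a time, and it is exactly what lets the lemma apply to groups for which no single generator $e$ works as in (\ref{eq:commArg}).

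Next I would expand both tensors using bilinearity of $\otimes$ over $\mathbb{Z}$. The off-diagonal terms vanish by the orthogonality assumption $e_i\otimes e_j=0$ for $i\neq j$, leaving only diagonal contributions:
\begin{equation*}
u\otimes v=\sum_{i,j}m_i n_j\,(e_i\otimes e_j)=\sum_i m_i n_i\,(e_i\otimes e_i)
\end{equation*}
and likewise
\begin{equation*}
v\otimes u=\sum_{i,j}n_i m_j\,(e_i\otimes e_j)=\sum_i n_i m_i\,(e_i\otimes e_i)\,.
\end{equation*}
Since $m_i n_i=n_i m_i$ in $\mathbb{Z}$, the two diagonal sums coincide, giving $u\otimes v=v\otimes u$ for the chosen pair, and hence for all pairs.

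This is essentially a routine bilinearity computation generalising the single-generator identity (\ref{eq:commArg}), so I anticipate no serious obstacle. The only point worth a moment's care is the passage from generators to the whole group: $\sigma_{GG}$ and $1_{GG}$ are additive endomorphisms of $G\otimes G$ that agree on the generating set of simple tensors, and therefore agree on all of $G\otimes G$.
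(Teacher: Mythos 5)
Your proof is correct and is exactly the argument the paper intends: the lemma is stated as a generalisation of the single-generator computation (\ref{eq:commArg}), and your bilinear expansion with the off-diagonal terms killed by $e_s\otimes e_t=0$ is precisely that pattern, extended from one $e$ to a mutually orthogonal family. The final reduction from simple tensors to all of $G\otimes G$ via additivity of $\sigma_{GG}$ is also handled properly, so there is nothing to add.
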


\subsubsection{The Pareigis example}

Let $s=\pm1$ denote the degree of the differential,  $D_{s}=\mathbb{Z}$, and denote its generator by $d$. By the argument above, $D_{i\neq s}=0.$  The biring $(D\oplus \mathbb{Z})\rtimes \mathbb{Z}[x,x^{-1}]$ has underlying abelian group $Q:=\mathbb{Z}[x,x^{-1}]\oplus \mathbb{Z}\otimes \mathbb{Z}[x,x^{-1}]$. The (co)unit and (co)multiplication are determined using (\ref{eq:monCross}) and (\ref{eq:comonCross}):
\begin{eqnarray}
\begin{aligned}
Q \xrightarrow{\epsilon} \mathbb{Z} \ \text{; } \ d\otimes x^j \mapsto 0 \ \text{, } \  x^j \mapsto 1
\end{aligned}
\end{eqnarray}
\begin{eqnarray}
\begin{aligned}
\mathbb{Z}\xrightarrow{\eta} Q \text{; } \ 1\mapsto x^0
\end{aligned}
\end{eqnarray}
\begin{eqnarray}
\begin{aligned}
Q\xrightarrow{\delta} Q\otimes Q \text{; } \  
 x^k \mapsto x^k\otimes x^k   \text{, } \ \\
 d\otimes x^j\mapsto d\otimes x^j\otimes x^j+x^{s+j}\otimes d\otimes x^j 
\end{aligned}
\end{eqnarray}
\begin{eqnarray}
\begin{aligned}
Q\otimes Q \xrightarrow{\mu} Q \text{; } \  
d\otimes x^j\otimes d\otimes x^k \mapsto 0  \text{, } \ 
d\otimes x^j\otimes x^k \mapsto d\otimes x^{j+k} \text{, } \ \\
x^j\otimes d\otimes x^k \mapsto (-1)^j d\otimes x^{j+k}  \text{, } 
x^j\otimes x^k \mapsto x^{j+k} \phantom{AAAAA}
\end{aligned}
\end{eqnarray}

To see what the antipode is, consider the general antipode diagram (\ref{ap}), and label the edges
\begin{equation}
\begin{tikzpicture}[every node/.style={scale=0.6},baseline=(current  bounding  box.center)]
\def \strx {1}
\def \stry {0.4}
\def \angmnd {0}
\def \angio {0}

\coordinate (inH) at (-1 * \strx, 6 * \stry);
\coordinate (inA) at (0.75 * \strx, 6 * \stry);

\coordinate (apH) at (-1.5 * \strx, 0 * \stry);
\fill (apH) circle (1.5pt);
\coordinate (apA1) at (0.75 * \strx, 5.2 * \stry);
\fill (apA1) circle (1pt);

\coordinate (cmndA1) at (0.75 * \strx, 4.5 * \stry);
\coordinate (elt) at (0.5 * \strx, 2.5 * \stry);
\coordinate (apA2) at (0.8 * \strx, 1.75 * \stry);
\fill (apA2) circle (1pt);
\coordinate (cmndA2) at (0.8 * \strx, 1 * \stry);
\coordinate (mndA1) at (-0 * \strx, -1 * \stry);
\coordinate (mndA2) at (-0.5 * \strx, -2 * \stry);
\coordinate (mndA3) at (1 * \strx, -5 * \stry);

\coordinate (act) at (-1 * \strx, -5 * \stry);

\coordinate (outH) at (-1 * \strx, -6 * \stry);
\coordinate (outA) at (1 * \strx, -6 * \stry);

\node at (outH) [left] {$d^k_i$};
\node at (apH) [below left] {$d^k_i$};
\node at (inH) [left] {$(-1)^kd^k_i$};

\node at (outA) [left] {$x^j$};
\node at (mndA2) [above] {$x^i$};
\node at (elt) [above] {$(-1)^{-i(i+j)}$};
\node at (inA) [right] {$x^{-(i+j)}$};

\draw[thick] (inH) to[out=-90+\angio,in=90] (apH)
				   to[out=-90,in=\angmnd] (act)
				   to[out=-90,in=90-\angio] (outH);
\draw[very thin] (inA) to[out=-90-\angio,in=90] (apA1)
					   to[out=-90,in=90] (cmndA1);
\draw[very thin] (cmndA1) to[out=180+\angmnd,in=180-\angmnd] (mndA2);
\draw[very thin] (cmndA1) to[out=-\angmnd,in=180-\angmnd] (mndA3)
						  to[out=-90,in=90+\angio] (outA);
\draw[very thin] (elt) to[out=180,in=\angmnd] (mndA1);
\draw[very thin] (elt) to[out=0,in=90](apA2)
					   to[out=-90,in=90] (cmndA2);
\draw[very thin] (cmndA2) to[out=180+\angmnd,in=180-\angmnd] (mndA1);
\draw[very thin] (cmndA2) to[out=-\angmnd,in=\angmnd] (mndA3);
\draw[very thin] (mndA1) to[out=-90,in=\angmnd] (mndA2)
						 to[out=-90,in=180-\angmnd] (act);
\end{tikzpicture}
\end{equation}
where either $k=1$ and $i=s$, or $i=k=0$. So we have
\begin{eqnarray}
\begin{aligned}
Q \xrightarrow{s} Q\text{; } \
d\otimes x^j\mapsto (-1)^j d\otimes x^{-j-s}\text{, } \
x^j\mapsto x^{-j} \ .
\end{aligned}
\end{eqnarray}
We get exactly the Pareigis Hopf ring $P$ \eqref{PareigisEx} by setting
\begin{eqnarray}
\begin{aligned}
s=-1\text{, } \xi=x \text{, } \ \psi=d\otimes x^0 \ .
\end{aligned}
\end{eqnarray}

On the other hand, by setting
\begin{eqnarray}
\begin{aligned}
s=1\text{, } \xi=x \text{, } \ \psi=d\otimes x^0 
\end{aligned}
\end{eqnarray}
we get a modified Pareigis ring,
\begin{align}
P_+&=\mathbb{Z}\langle {\xi,\xi^{-1},\psi} \rangle/(\xi\psi+\psi\xi,\psi^2)\\
\Delta(\xi)&= \xi\otimes \xi,\; 
\epsilon(\xi)=1,\; s(\xi)=\xi^{-1} \nonumber \\
\Delta(\psi)&= \psi\otimes 1+\xi\otimes \psi,\;
\epsilon(\psi)=0,\; s(\psi)= \psi\xi^{-1}\,, \nonumber
\end{align}
which corresponds to exchanging $\xi$ and $\xi^{-1}$, and gives, as comodules, chains like (\ref{eq:diffArrows}).

\subsubsection{When $\mathcal{W}=\text{DGAb}$}

We can iterate the process by taking $\mathcal{W}=\text{DGAb}$, noting that the graded abelian group $D_s$, with $s=\pm 1$ has a unique differential structure, and that $d^{(D_s\otimes B)}=-d^{(B)},$ to obtain a second differential $d'$ on the chain $B$, satisfying
\begin{equation*}
\begin{tikzpicture}[xscale=1, yscale=0.66, every node/.style={scale=1},baseline=(current  bounding  box.center)]
%objects lvl1
\node (DD1) at (-1,1) {$B_{i}$};
\node (II1) at (1,1) {$B_{i-s}$};
%objects lvl3
\node (DD2) at (-1,-1) {$B_{i-1}$};
\node (II2) at (1,-1) {$B_{i-1-s}$};
%
%\node (tC) at (0,0) {\scriptsize $-1$};
%objects lvl2
%arrowsMu
\path[->,font=\tiny,>=angle 90]
(DD1) edge node[left] {$d$} (DD2)
(DD1) edge node[above] {$d'$} (II1)
(II1) edge node[right] {$-d$} (II2)
(DD2) edge node[below] {$d'$} (II2);
\end{tikzpicture}
\end{equation*}

\subsubsection{When $\mathcal{V}=\text{DGAb}$}

Now we consider the grading monoid in $\mathcal{V}=\text{DGAb}$. Note that $\text{DGAb}=\text{Ab-Cat}(\mathcal{D},\text{Ab})$ for a particular abelian category $\mathcal{D}$. Using the universal property of EM-objects in $\text{Ab-Cat}$ we can conclude that $Z-$comodules in DGAb are equivalently chains in GAb. So comodules $\beta:B\rightarrow Z\otimes B$ turn each component $B_n$ of the chain into a graded abelian group with components $B_{nm}$, and the differential $d_n:B_n\rightarrow B_{n-1}$ separates into components $d_{nm}:B_{nm}\rightarrow B_{n-1,m}$. Comodule morphisms $f:B\rightarrow C$ are chain maps respecting the grading; that is $f$ consists of components $f_{nm}:B_{nm}\rightarrow C_{nm}$ satisfying $f\circ d=d\circ f.$

Let $\kappa=\pm 1$ parametrise the two different coelements $Z$ can have: substitute $(-1)^{ij}$ in (\ref{diag:coel}) with $\kappa^{ij}$. Then, the tensor product
\begin{align}
(B\otimes C)_{nm} &= \sum_{i+j=n} (B_i\otimes C_j)_m\\
	&= \sum_{i+j=n}\sum_{p+q=m} B_{ip}\otimes C_{jq}\\
d(b\otimes c) &= db\otimes c + (-1)^i b\otimes dc
\end{align}
has braiding
\begin{equation}
\sigma(b\otimes c)=(-1)^{ij} \kappa^{pq} c\otimes b\,.
\end{equation}

For a differential $D$ we can choose a graded chain with $D_{s\frac{1+\kappa}{2},1}=\mathbb{Z}$ and $0$ for the other components (when $\kappa=1$, $s=\pm 1$ gives two different directions, as in the previous example). Note that $\sigma_{DD}=-1$ in all cases. So we can consider $(I\oplus D)\text{-}$comodules to obtain a second differential:
\begin{itemize}
\item for $\kappa=-1$ we get chains in DGAb, aka double complexes, with the second differential $d'_{nm}:B_{nm}\rightarrow B_{n,m-1}$ satisfying
\begin{equation*}
\begin{tikzpicture}[xscale=1, yscale=0.66, every node/.style={scale=1},baseline=(current  bounding  box.center)]
%objects lvl1
\node (DD1) at (-1,1) {$B_{n,m}$};
\node (II1) at (1,1) {$B_{n,m-1}$};
%objects lvl3
\node (DD2) at (-1,-1) {$B_{n-1,m}$};
\node (II2) at (1,-1) {$B_{n-1,m-1}$};
%
%\node (tC) at (0,0) {\scriptsize $-1$};
%objects lvl2
%arrowsMu
\path[->,font=\tiny,>=angle 90]
(DD1) edge node[left] {$d$} (DD2)
(DD1) edge node[above] {$d'$} (II1)
(II1) edge node[right] {$d$} (II2)
(DD2) edge node[below] {$d'$} (II2);
\end{tikzpicture}
\end{equation*}
\item for $\kappa=1$ we get a second differential $d'_{nm}:B_{nm}\rightarrow B_{n-1,m-1}$ satisfying
\begin{equation*}
\begin{tikzpicture}[xscale=1.2, yscale=0.66, every node/.style={scale=1},baseline=(current  bounding  box.center)]
%objects lvl1
\node (DD1) at (-1,1) {$B_{n,m}$};
\node (II1) at (1,1) {$B_{n-s,m-1}$};
%objects lvl3
\node (DD2) at (-1,-1) {$B_{n-1,m}$};
\node (II2) at (1,-1) {$B_{n-1-s,m-1}$};
%
%\node (tC) at (0,0) {\scriptsize $-1$};
%objects lvl2
%arrowsMu
\path[->,font=\tiny,>=angle 90]
(DD1) edge node[left] {$d$} (DD2)
(DD1) edge node[above] {$d'$} (II1)
(II1) edge node[right] {$-d$} (II2)
(DD2) edge node[below] {$d'$} (II2);
\end{tikzpicture}
\end{equation*}
\end{itemize}

\bibliographystyle{acm}
%\bibliography{../../Notes/references_c.bib}

\end{document}